\numberwithin{equation}{section}
\newcommand{\C}{\mathbb{C}}
\newcommand{\R}{\mathbb{R}}
\newcommand{\N}{\mathbb{N}}
\newtheorem{theorem}{Theorem}[section]
\newtheorem{lemme}[theorem]{Lemma}
\newtheorem{prop}[theorem]{Proposition}
\newtheorem{example}[theorem]{Example}
\newcommand{\RC}{RC_{\lambda',\lambda''}^{\lambda'''}}
\newcommand{\Jacobi}{P_l^{\lambda'-1,\lambda''-1}}
\author{Quentin Labriet\footnote{Laboratoire de Mathématiques de Reims (LMR-UMR 9008 du CNRS),
Université de Reims Champagne-Ardenne Moulin de la Housse - BP 1039
51687 REIMS cedex 2, France. Email address: quentin.labriet@univ-reims.fr}}
\title{Holographic transform for tensor product of holomorphic discrete series}
\date{}
\begin{document}

\maketitle

%\tableofcontents 

\begin{abstract}
We study holographic operators associated with Rankin-Cohen brackets which are symmetry breaking operators for the restriction of tensor products of holomorphic discrete series of the universal covering of $SL_2(\R)$. Furthermore, we investigate a geometrical interpretation of these operators and their relations to classical Jacobi polynomials.
\end{abstract}

\section{Introduction}

Let $G$ be a real reductive Lie group, $G'$ a Lie subgroup of $G$ and $\pi$ an irreducible unitary representation of $G$ on a vector space $V$. The decomposition into irreducible representations $(\rho,W)$ of $G'$ of the restriction $\pi|_{G'}$ of $\pi$ to $G'$ is called the branching rule. \emph{Symmetry breaking operators} are defined as  elements of the space of linear continuous maps $V\to W$ that intertwine $\pi|_{G'}$ and $\rho$, denoted by $\operatorname{Hom}_{G'}(\pi|_{G'},\rho)$, for any given  irreducible representation $(\rho,W)$ of $G'$. Analogously, elements of the space $\operatorname{Hom}_{G'}(\rho,\pi|_{G'})$ are called \emph{holographic operators} (see for example \eqref{eq:Adjoint_RC_reproducing}). 

In the article \cite{KP}, the authors investigate holographic operators in two different geometric settings : $(G,G')=(\widetilde{SL_2(\R)}\times \widetilde{SL_2(\R)},\widetilde{SL_2(\R)})$ referred to as the \emph{diagonal case}, and $(G,G')=(SO_0(2,n),SO_0(2,n-1))$ referred to as the \emph{conformal case}. In both situations, they consider the restriction of holomorphic discrete series representations of $G$ to $G'$ and develop two explicit approaches to study corresponding holographic operators:
\begin{itemize}
\item The first method is based on the Laplace transform for tube domains, and leads to a new transform involving integration along a line segment in the complex upper half-plane $\Pi$ (see \ref{eq:Operator_holographic_holomorphe}) for the diagonal case.
\item The second one uses the reproducing kernel technics for weighted Bergman spaces, and leads to the construction of a \emph{relative reproducing kernel} in the conformal case (see Thm 3.10 in \cite{KP}).
\end{itemize}
It is known (see \cite{Dijk_Pev} for instance) that the symmetry breaking operators for the decomposition of the tensor products of holomorphic discrete series of $SL_2(\R)$ (what we call the diagonal case) are proportional to Rankin-Cohen brackets $\RC$ (see \eqref{eq:RC}).

In this work, we extend the method of the relative reproducing kernel to the diagonal case and describe the corresponding holographic $(\RC)^*$ operator as follows.

\begin{theorem}\label{thm:Fomule_adjoint_RC_relative_reproducing}
Suppose $\lambda',\lambda'',\lambda'''>1$ such that $l=\frac{1}{2}(\lambda'''-\lambda'-\lambda'')\in \N$. Let $w_1, w_2 \in \Pi$, and $g\in H^2_{\lambda'''}(\Pi)$. Then we have:
\begin{equation}
(\RC)^*g(w_1,w_2)=C(\lambda',\lambda'') \int_\Pi g(z) K_{\lambda',\lambda''}^{\lambda'''}(z,w_1,w_2)~d\mu(z),
\end{equation}
where $K_{\lambda',\lambda''}^{\lambda'''}(z,w_1,w_2)=(w_2-w_1)^l\left(\frac{w_1-\bar{z}}{2i}\right)^{-(\lambda'+l)}\left(\frac{w_2-\bar{z}}{2i}\right)^{-(\lambda''+l)}$, $d\mu(x+iy)=y^{\lambda'''-2}~dxdy$, and $C(\lambda',\lambda'')=\frac{(\lambda'-1)_{l+1}(\lambda''-1)_{l+1}}{2^{2l+4}\pi^2 l!}$.
\end{theorem}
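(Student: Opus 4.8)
The plan is to transfer the problem to reproducing kernels on both sides. First I would set $\mathcal H := H^2_{\lambda'}(\Pi)\,\widehat{\otimes}\,H^2_{\lambda''}(\Pi)$, the reproducing kernel Hilbert space of holomorphic functions on $\Pi\times\Pi$ whose reproducing kernel at a point $(w_1,w_2)$ is the product $(z_1,z_2)\mapsto K_{\lambda'}(z_1,w_1)\,K_{\lambda''}(z_2,w_2)$ of the weighted Bergman kernels $K_\lambda(z,w)=\frac{\lambda-1}{4\pi}\left(\frac{z-\bar w}{2i}\right)^{-\lambda}$ of $H^2_\lambda(\Pi)$. Since $\RC$ is a nonzero scalar multiple of a symmetry breaking operator and the tensor product of two holomorphic discrete series of $\widetilde{SL_2(\R)}$ is discretely decomposable, $\RC\colon\mathcal H\to H^2_{\lambda'''}(\Pi)$ is bounded, so its Hilbert space adjoint exists and satisfies $\langle(\RC)^*g,F\rangle_{\mathcal H}=\langle g,\RC F\rangle_{H^2_{\lambda'''}}$ for all $F\in\mathcal H$. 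Taking $F=K_{\lambda'}(\cdot,w_1)\otimes K_{\lambda''}(\cdot,w_2)$, which is the reproducing kernel of $\mathcal H$ at $(w_1,w_2)$, the left-hand side is the value $(\RC)^*g(w_1,w_2)$, and therefore
\[
(\RC)^*g(w_1,w_2)=\big\langle g,\ \RC\big(K_{\lambda'}(\cdot,w_1)\otimes K_{\lambda''}(\cdot,w_2)\big)\big\rangle_{H^2_{\lambda'''}},
\]
which, on unfolding the $H^2_{\lambda'''}(\Pi)$ inner product (norm $\|f\|^2=\int_\Pi|f|^2y^{\lambda'''-2}\,dx\,dy$), equals $\int_\Pi g(z)\,\overline{\RC\big(K_{\lambda'}(\cdot,w_1)\otimes K_{\lambda''}(\cdot,w_2)\big)(z)}\,y^{\lambda'''-2}\,dx\,dy$.

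The computational heart is then to evaluate the holomorphic function $z\mapsto\RC\big(K_{\lambda'}(\cdot,w_1)\otimes K_{\lambda''}(\cdot,w_2)\big)(z)$ explicitly. Writing $\RC$ as the bilinear differential operator $\sum_{k=0}^{l}(-1)^k\binom{\lambda'+l-1}{l-k}\binom{\lambda''+l-1}{k}f_1^{(k)}f_2^{(l-k)}$, I would apply it to the pair of powers $(z-\bar w_1)^{-\lambda'}$ and $(z-\bar w_2)^{-\lambda''}$, compute the derivatives, simplify the binomial coefficients via $\binom{\lambda'+l-1}{l-k}(\lambda')_k=\tfrac{(\lambda')_l}{(l-k)!}$ and the analogous identity with $\lambda''$, and collapse the resulting sum by the binomial formula $\sum_{k=0}^{l}\binom{l}{k}(-s)^k=(1-s)^l$ with $s=\tfrac{z-\bar w_2}{z-\bar w_1}$. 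This should yield the closed form
\[
\RC\big((z-\bar w_1)^{-\lambda'},(z-\bar w_2)^{-\lambda''}\big)=\frac{(\lambda')_l(\lambda'')_l}{l!}\,(\bar w_1-\bar w_2)^l\,(z-\bar w_1)^{-\lambda'-l}(z-\bar w_2)^{-\lambda''-l}.
\]
Reinstating the constants $\frac{\lambda'-1}{4\pi}$, $\frac{\lambda''-1}{4\pi}$ and the powers of $2i$ hidden in $K_{\lambda'}$ and $K_{\lambda''}$, and then taking the complex conjugate --- which replaces $z-\bar w_j$ by $w_j-\bar z$, while the leftover $\overline{(2i)^{-2l}}=(-1)^l2^{-2l}$ turns $(\bar w_1-\bar w_2)^l$ into $2^{-2l}(w_2-w_1)^l$ --- I expect to recover exactly $C(\lambda',\lambda'')\,K_{\lambda',\lambda''}^{\lambda'''}(z,w_1,w_2)$ with $C(\lambda',\lambda'')=\frac{\lambda'-1}{4\pi}\cdot\frac{\lambda''-1}{4\pi}\cdot\frac{(\lambda')_l(\lambda'')_l}{2^{2l}\,l!}$; rewriting $(\lambda-1)(\lambda)_l=(\lambda-1)_{l+1}$ then gives the stated $\frac{(\lambda'-1)_{l+1}(\lambda''-1)_{l+1}}{2^{2l+4}\pi^2\,l!}$.

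Plugging this back into the integral finishes the argument; absolute convergence for every $g\in H^2_{\lambda'''}(\Pi)$ is clear since, because $(\lambda'+l)+(\lambda''+l)=\lambda'''$, the integrand is as a function of $z$ holomorphic with the same decay as the Bergman kernel $K_{\lambda'''}(\cdot,w)$ --- and anyway automatic from the boundedness of $\RC$. I expect the main obstacle to lie not in the Rankin--Cohen collapse, which is a short finite-sum manipulation, but in (i) making precise that $\RC$ genuinely extends continuously to $\mathcal H$ so that the adjoint/reproducing-kernel identity above is legitimate, which should be quoted from the discrete decomposability of the tensor product of holomorphic discrete series, and (ii) the careful bookkeeping of the several normalization constants needed to pin down the exact value of $C(\lambda',\lambda'')$.
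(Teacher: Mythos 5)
Your proposal is correct and follows essentially the same route as the paper's first proof (Section \ref{sec:Premiere_preuve}): both apply the reproducing-kernel identity $(R^*g)(\zeta)=(g,RK^{(1)}(\cdot,\zeta))_{H_2}$ to reduce the problem to evaluating $\RC$ on the product Bergman kernel $K_{\lambda'}(\cdot,w_1)\otimes K_{\lambda''}(\cdot,w_2)$, collapse the finite sum by the binomial theorem, and then track the normalization constants to land on $C(\lambda',\lambda'')$. The only point to check is that your normalization $\sum_{k}(-1)^k\binom{\lambda'+l-1}{l-k}\binom{\lambda''+l-1}{k}f_1^{(k)}f_2^{(l-k)}$ differs from the paper's definition \eqref{eq:RC} by an overall factor $(-1)^l$, so the sign bookkeeping in the term $(w_2-w_1)^l$ should be reconciled with that convention.
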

Notice that this kind of operators has already been considered by H. Rosengren in \cite{Rosen}. We give two different proofs of this theorem, the first one is inspired by the proof of Theorem 3.10 in \cite{KP}, and the second one, based on the Laplace transform, is new and can also be used in the conformal case.\\

Our second point (see Fact.\ref{fact:sym_break}) explores a conceptual interpretation of the link between orthogonal polynomials and branching rules for restriction of discrete series representations. More precisely, Rankin-Cohen brackets can be expressed in terms of Jacobi polynomials as showed in \cite{KP_SBO} (see Thm.8.1). This is due to the fact that the equivariance condition for symmetry breaking operators can be reduced, using the F-method  (see \cite{KP_SBO_1}), to the Jacobi ordinary differential equation for the symbols of such operators. In \cite{KP} p.15, the authors give yet another interpretation of the link between Rankin-Cohen brackets and Jacobi polynomials based on the fact that holographic transform can be expressed using the inversion of the usual Jacobi transform. In this work, we interpret this statement emphasising the fact that Jacobi polynomials form an orthogonal basis for the Hilbert space $L^2((-1,1),(1-v)^{\alpha}(1+v)^{\beta}~dv)$ (see \cite{KP}, section 5.1). For this, we use another realization of the holomorphic discrete series in which symmetry breaking operators are expressed in terms of the classical Jacobi transform. \\
\\
Notation: We use the Pochammer symbol, defined for $\lambda \in \C$ and $n\in\N$ by :
\[(\lambda)_n=\lambda(\lambda+1)\cdots(\lambda+n-1)=\frac{\Gamma(\lambda+n)}{\Gamma(\lambda)}.\]

and set the condition
\begin{equation} \label{condition}
\lambda',\lambda'',\lambda'''>1 \text{ such that }l:=\frac{1}{2}(\lambda'''-\lambda'-\lambda'')\in \N.
\tag{C1}
\end{equation}

\section{Setting}\label{sec:Setting}
In this section, we describe two different models for the holomorphic discrete series representations of the universal covering group of the Lie group $SL_2(\R )$, denoted $\widetilde{SL_2(\R)}$, and their tensor products. 
\subsection{Holomorphic model}
We denote by $\Pi$ the Poincaré upper half-plane endowed with the hyperbolic metric, and by $H^2_\lambda(\Pi)$ the weighted Bergman space defined by:
\[H^2_\lambda(\Pi)=\mathcal{O}\left(\Pi\right)\cap L^2\left(\Pi,y^{\lambda-2}~dxdy\right).\]
It is known that $H^2_\lambda(\Pi)=\{0\}$ for $\lambda \leq 1$, so we suppose that $\lambda>1$. In this case $H^2_\lambda(\Pi)$ admits a reproducing kernel, also called the Bergman kernel, given  by (see, for instance, \cite{Far_Kor}, Prop.XIII.1.2, p.261):

 \begin{equation} \label{eq:Noyau_bergmann}
 K_\lambda(z,w)=\frac{\lambda-1}{4\pi}\left(\frac{z-\bar{w}}{2i}\right)^{-\lambda}.
 \end{equation}
We set, for $\lambda',\lambda''>1$, $H^2_{\lambda',\lambda''}(\Pi \times \Pi)\simeq H^2_{\lambda'}(\Pi)\hat{\otimes}H^2_{\lambda''}(\Pi)$, where $\hat{\otimes}$ denotes the Hilbert space completion of the tensor product. This space also admits a reproducing kernel:

\begin{equation}\label{eq:Noyau_produit_Bergman}
K_{\lambda',\lambda''}(z,w)=K_{\lambda'}(z_1,w_1)\cdot K_{\lambda''}(z_2,w_2) =\frac{(\lambda'-1)(\lambda''-1)}{(4\pi)^2}\left(\frac{z_1-\bar{w_1}}{2i}\right)^{-\lambda'}\left(\frac{z_2-\bar{w_2}}{2i}\right)^{-\lambda''}.
\end{equation}

For $\lambda\in \N\backslash \{0;1\}$, the holomorphic discrete series representations $\pi_\lambda$ of $SL_2(\R)$ can be realized on $H_\lambda^2(\Pi)$ by the following formula:
\begin{equation}\label{eq:holomorphic_discrete_series}
 \left(\pi_{\lambda}(g)f\right)(z)=(cz+d)^{-\lambda}f\left(\frac{az+b}{cz+d}\right),
\end{equation}
where $g^{-1}=\begin{pmatrix}
a&b\\c&d
\end{pmatrix} \in SL_2(\R),$ and $f \in H_\lambda^2(\Pi)$.
This representation lifts to a unitary and irreducible representation of the universal covering group $\widetilde{SL_2(\R)}$ for $\lambda>1$, through an appropriate choice of the determination of the power function.

The outer product representation $\pi_{\lambda'}\boxtimes\pi_{\lambda''}$ of the direct product group $\widetilde{SL_2(\R)}\times \widetilde{SL_2(\R)}$ acts on the space $H^2_{\lambda',\lambda''}(\Pi\times\Pi)$. This representation is also irreducible and unitary for $\lambda',\lambda''>1$.

\subsection{Rankin-Cohen operators}
Let $\lambda',\lambda'',\lambda'''$ verify \eqref{condition}, and define a differential operator $R_{\lambda',\lambda''}^{\lambda'''}$ from $\mathcal{O}(\Pi\times \Pi)$ to $\mathcal{O}(\Pi\times \Pi)$ by
\begin{equation}\label{eq:RC}
R_{\lambda',\lambda''}^{\lambda'''}(f)(w_1,w_2)=\sum_{j=0}^l \frac{(-1)^j(\lambda'+l-j)_j(\lambda''+j)_{l-j}}{j!(l-j)!}\frac{\partial^lf}{\partial z_1^{l-j} \partial z_2^j}(w_1,w_2).
\end{equation}

The \emph{Rankin-Cohen operator} is a map from $\mathcal{O}(\Pi\times \Pi)$ to $\mathcal{O}(\Pi)$ defined by :
\begin{equation}
\RC = Rest \circ R_{\lambda',\lambda''}^{\lambda'''},
\end{equation}
where  $Rest$ is the restriction to the diagonal of $\Pi\times\Pi$.
The Rankin-Cohen operator $\RC$ is a symmetry breaking operator for the outer product representation $\pi_{\lambda'}\boxtimes\pi_{\lambda''}$, more precisely it generates the space $\operatorname{Hom}_{\widetilde{SL_2(\R)}}(\pi_{\lambda'}\boxtimes\pi_{\lambda''}|_{\widetilde{SL_2(\R)}},\pi_{\lambda'''})$, for $\lambda',\lambda'',\lambda'''$ satisfiying \eqref{condition} (see \cite{KP_SBO} Cor. 9.3 for the precise statement). \\

The goal of this paper is to study the holographic operators associated to the Rankin-Cohen operators. In order to do so, we compute the adjoint operators $(RC_{\lambda',\lambda''}^{\lambda'''})^*: H^2_{\lambda'''}(\Pi)\to H^2_{\lambda',\lambda''}(\Pi\times\Pi)$ which give, in the unitary case, the holographic operators.

\subsection{Holographic operators for the diagonal case}
Let $\lambda',\ \lambda'',\ \lambda'''$ verify \eqref{condition}, and define the operators $\Psi_{\lambda',\lambda''}^{\lambda'''}~:~H^2_{\lambda'''}(\Pi)\to H_{\lambda',\lambda''}^2(\Pi\times\Pi)$ by 
\begin{equation}\label{eq:Operator_holographic_holomorphe}
\Psi_{\lambda',\lambda''}^{\lambda'''}(g)(w_1,w_2)=\frac{(w_1-w_2)^l}{2^{\lambda'+\lambda''+2l-1}l!}\int_{-1}^1g(w(v))(1-v)^{\lambda'+l-1}(1+v)^{\lambda''+l-1}dv,
\end{equation}
where $w(v)=\frac{1}{2}((w_2-w_1)v+(w_2+w_1))$.\\
In \cite{KP}, the authors prove that this is a holographic operator from $H^2_{\lambda'''}(\Pi)$ to $H^2_{\lambda',\lambda''}(\Pi\times \Pi)$.
They also establish a Parseval-Plancherel type theorem for the corresponding Rankin-Cohen transorm and its associated holographic transform (see \cite{KP} Thm. 2.7).
Finally, they show (Prop 2.23) that :
\begin{equation}\label{eq:link_Psi_dualRC}
(\RC)^*=C\Psi_{\lambda',\lambda''}^{\lambda'''}.
\end{equation}
where $C=\frac{\Gamma(\lambda'+\lambda''+2l-1)}{2^{2l+2}\pi\Gamma(\lambda'-1)\Gamma(\lambda''-1)}$.

However, they also used another method to get a different expression for this map (see Lemma \ref{lem:calc_adj} below). This approach leads to a different type of integral transformations based on the relative reproducing kernel $K_{\lambda',\lambda''}^{\lambda'''}$, defined for $w_2, w_1, z \in \Pi$ by:
\begin{equation}\label{eq:Relative_reproducing}
K_{\lambda',\lambda''}^{\lambda'''}(z,w_1,w_2)=(w_2-w_1)^l\left(\frac{w_1-\bar{z}}{2i}\right)^{-(\lambda'+l)}\left(\frac{w_2-\bar{z}}{2i}\right)^{-(\lambda''+l)}.
\end{equation}

This relative reproducing kernel gives an explicit expression for the adjoint of the Rankin-Cohen operator as follows
\begin{theorem}
Set $\lambda',\lambda'',\lambda'''$ verify \eqref{condition}. Let $w_1,\ w_2 \in \Pi$, and $g\in H^2_{\lambda'''}(\Pi)$. Then we have:
\begin{equation}\label{eq:Adjoint_RC_reproducing}
(\RC)^*g(w_1,w_2)=C(\lambda',\lambda'') \int_\Pi g(z) K_{\lambda',\lambda''}^{\lambda'''}(z,w_1,w_2)d\mu(z),
\end{equation}
where $C(\lambda',\lambda'')=\frac{(\lambda'-1)_{l+1}(\lambda''-1)_{l+1}}{2^{2l+4}\pi^2 l!}$.
\end{theorem}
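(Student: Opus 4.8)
The plan is to reduce the problem to evaluating the Rankin--Cohen operator on the reproducing kernel of the product space $H^2_{\lambda',\lambda''}(\Pi\times\Pi)$. Since $\RC$ restricts to a bounded operator $H^2_{\lambda',\lambda''}(\Pi\times\Pi)\to H^2_{\lambda'''}(\Pi)$ (this boundedness is implicit in the discussion above and is established in \cite{KP}, and it guarantees in particular that $(\RC)^*$ is everywhere defined), we may argue as follows. Fix $w_1,w_2\in\Pi$ and write $K_{(w_1,w_2)}$ for the function $(z_1,z_2)\mapsto K_{\lambda'}(z_1,w_1)K_{\lambda''}(z_2,w_2)$, which by \eqref{eq:Noyau_produit_Bergman} is the reproducing kernel of $H^2_{\lambda',\lambda''}(\Pi\times\Pi)$ at $(w_1,w_2)$. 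Then
\[
(\RC)^*g(w_1,w_2)=\big\langle (\RC)^*g,\,K_{(w_1,w_2)}\big\rangle_{H^2_{\lambda',\lambda''}}=\big\langle g,\,\RC\,K_{(w_1,w_2)}\big\rangle_{H^2_{\lambda'''}},
\]
where $\RC$ acts in the two holomorphic variables of $K_{(w_1,w_2)}$ and then restricts to the diagonal. Everything thus reduces to identifying the holomorphic function $z\mapsto(\RC\,K_{(w_1,w_2)})(z)$ explicitly.

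The core of the argument is this computation. Writing $K_{(w_1,w_2)}(z_1,z_2)$ as a constant times $\big(\tfrac{z_1-\bar w_1}{2i}\big)^{-\lambda'}\big(\tfrac{z_2-\bar w_2}{2i}\big)^{-\lambda''}$, each mixed derivative $\partial_{z_1}^{l-j}\partial_{z_2}^{\,j}$ factorizes, and each factor contributes a power of $2i$, a sign, and a Pochhammer symbol, e.g. $\partial_{z_1}^{l-j}\big(\tfrac{z_1-\bar w_1}{2i}\big)^{-\lambda'}=(2i)^{-(l-j)}(-1)^{l-j}(\lambda')_{l-j}\big(\tfrac{z_1-\bar w_1}{2i}\big)^{-\lambda'-l+j}$. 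Substituting into \eqref{eq:RC}, the coefficients telescope via $(\lambda'+l-j)_j(\lambda')_{l-j}=(\lambda')_l$ and $(\lambda''+j)_{l-j}(\lambda'')_j=(\lambda'')_l$, so the $j$-dependence remains only through monomials. Restricting to $z_1=z_2=z$ and pulling out $\big(\tfrac{z-\bar w_1}{2i}\big)^{-(\lambda'+l)}\big(\tfrac{z-\bar w_2}{2i}\big)^{-(\lambda''+l)}$, the surviving sum collapses by the binomial theorem to $\tfrac1{l!}\big(\tfrac{\bar w_1-\bar w_2}{2i}\big)^l$. One concludes that $(\RC\,K_{(w_1,w_2)})(z)$ is a real constant times $(\bar w_1-\bar w_2)^l\big(\tfrac{z-\bar w_1}{2i}\big)^{-(\lambda'+l)}\big(\tfrac{z-\bar w_2}{2i}\big)^{-(\lambda''+l)}$, i.e. a multiple of the complex conjugate of the relative reproducing kernel \eqref{eq:Relative_reproducing}.

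It then remains to feed this back into the inner product and conjugate: using $\overline{(z-\bar w)/2i}=(w-\bar z)/2i$ turns the conjugated kernel into $K^{\lambda'''}_{\lambda',\lambda''}(z,w_1,w_2)$ itself, and collecting the scalars — the two products $(\lambda'-1)(\lambda')_l=(\lambda'-1)_{l+1}$ and $(\lambda''-1)(\lambda'')_l=(\lambda''-1)_{l+1}$, the Bergman normalisation $(4\pi)^{-2}$, the accumulated $(2i)^{-2l}$, the $1/l!$, and the sign coming from $(w_2-w_1)^l$ versus $(w_1-w_2)^l$ — produces exactly $C(\lambda',\lambda'')=\frac{(\lambda'-1)_{l+1}(\lambda''-1)_{l+1}}{2^{2l+4}\pi^2\,l!}$. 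This is the step where I would be most careful with bookkeeping: the telescoping of Pochhammer symbols and the binomial collapse are routine, but the powers of $2i$ and the sign conventions must be tracked precisely.

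The main obstacle I anticipate is not computational but the functional-analytic justification of the first displayed identity: one must know that $\RC$ is bounded from $H^2_{\lambda',\lambda''}(\Pi\times\Pi)$ onto $H^2_{\lambda'''}(\Pi)$ so that $(\RC)^*$ is well defined and one may legitimately move $\RC$ across the pairing against a reproducing kernel, and one must check that the resulting integral converges absolutely, which follows from the estimate $K^{\lambda'''}_{\lambda',\lambda''}(z,w_1,w_2)=O(|z|^{-\lambda'''})$ as $z\to\infty$ together with $\lambda'''>1$ and the facts recalled in Section \ref{sec:Setting}. Finally, an independent proof — the one that also carries over to the conformal case — would bypass reproducing kernels entirely: starting from $(\RC)^*=C\,\Psi^{\lambda'''}_{\lambda',\lambda''}$ as in \eqref{eq:link_Psi_dualRC}, one rewrites the segment integral \eqref{eq:Operator_holographic_holomorphe} as an integral over $\Pi$ by representing both $g$ and the kernel through the Laplace transform on the half-line and exchanging the order of integration; comparing the two expressions then yields the same formula and, after simplifying the ratio of Gamma factors, the same constant $C(\lambda',\lambda'')$.
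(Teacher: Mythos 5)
Your proposal follows essentially the same route as the paper's first proof: you invoke the adjoint identity $(\RC)^*g(w_1,w_2)=\big(g,\RC K_{\lambda',\lambda''}(\cdot,(w_1,w_2))\big)$ (the paper's Lemma \ref{lem:calc_adj}), compute $\RC$ on the product Bergman kernel via the derivative formula \eqref{eq:Partial_der_kernel}, telescope the Pochhammer symbols, collapse the sum by the binomial theorem after restricting to the diagonal, and conjugate to obtain the relative reproducing kernel \eqref{eq:Relative_reproducing} with the constant $C(\lambda',\lambda'')$. The Laplace-transform alternative you sketch at the end is likewise the paper's second proof, so there is nothing substantively different to flag.
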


We prove this theorem in the following sections (\ref{sec:Premiere_preuve} and \ref{sec:Deuxieme_preuve}) and make explicit the link with the operator $\Psi_{\lambda',\lambda''}^{\lambda'''}$ introduced in (\ref{eq:Operator_holographic_holomorphe}). For this we need another model for the holomorphic discrete series representations of $\widetilde{SL_2(\R)}$.

\subsection{$L^2$-model of holomorphic discrete series}
For $\lambda>1$, the Laplace transform defined by:
\begin{equation}\label{eq:fourier_transform_SL2}
\mathcal{F} g(z) =\int_0^\infty g(t)e^{izt}dt,
\end{equation}
is a one-to-one isometry (up to a constant) from $L^2_\lambda(\mathbb{R}^+):=L^2(\mathbb{R}^+,t^{1-\lambda}dt)$ to $H^2_\lambda (\Pi)$ (see for instance \cite{Far_Kor}, Thm. XII.1.1). More precisely:
\[\|\mathcal{F} g\|^2_{H^2_\lambda(\Pi)}=b(\lambda)\|g\|^2_{L^2_{\lambda}(\R^+)},\] for every $g\in L^2_{\lambda}(\R^+)$, where $b(\lambda)=2^{2-\lambda}\pi\Gamma(\lambda-1)$.

Using this transform, we consider another realization for the holomorphic discrete series representations $\pi_\lambda$ of $\widetilde{SL_2(\R)}$ on $L^2_\lambda(\R^+)$, and call it the \emph{$L^2$-model} for $\pi_\lambda$.

For $\lambda',\lambda''>1$, we define
\[L^2_{\lambda',\lambda''}(\R^+\times\R^+):=L^2_{\lambda'}(\R^+)\hat{\otimes}L^2_{\lambda''}(\R^+)\simeq L^2(\R^+\times\R^+,x^{1-\lambda'}y^{1-\lambda''}dxdy).\]
We denote $\mathcal{F}_2=\mathcal{F}\otimes\mathcal{F}$ the Laplace transform from $L^2_{\lambda',\lambda''}(\R^+\times\R^+)$ to $H^2_{\lambda',\lambda''}(\Pi\times \Pi)$.\\

\subsection{Holographic operators in the $L^2$-model}
Set $\lambda',\lambda'',\lambda'''$ satisfiying \eqref{condition}. Using the Laplace transform (\ref{eq:fourier_transform_SL2}), we define the analogue of the Rankin-Cohen operators in the $L^2$-model by:
\begin{equation}\label{eq:RC_hat_def}
\widehat{\RC}:=\mathcal{F}^{-1}\circ \RC \circ \mathcal{F}_2.
\end{equation}
It is a symmetry breaking operator for the $\widetilde{SL_2(\R)}$-action in the $L^2$-model. 

In \cite{KP}, the authors prove that $\widehat{\RC}$ is given by the following integral formula for $F\in L^2_{\lambda',\lambda''}(\R^+ \times \R^+)$ (see \cite{KP}, Prop. 2.13):
\begin{equation}\label{eq:RC_hat}
\widehat{\RC} F(t)=\frac{t^{l+1}}{2i^l}\int_{-1}^1P_l^{\lambda'-1,\lambda''-1}(v)F\left(\frac{t}{2}(1-v),\frac{t}{2}(1+v)\right)~dv,
\end{equation}
where $\Jacobi$ denotes the Jacobi polynomials (see \cite{KP}, section 5.1).

We define the following operator which associates to a function $g(t)$ defined on $\R^+$ a function of two variables $\Phi_{\lambda',\lambda''}^{\lambda'''}g$ on $\R^+\times\R^+$ :
\begin{equation}\label{eq:holographic_L2_phi}
\Phi_{\lambda',\lambda''}^{\lambda'''}g(x,y):=\frac{x^{\lambda'-1}y^{\lambda''-1}}{(x+y)^{\lambda'+\lambda''+l-1}}P_l^{\lambda'-1,\lambda''-1}\left(\frac{y-x}{x+y}\right)\cdot g(x+y).
\end{equation}
Once again, it is shown in \cite{KP}, that $\Phi_{\lambda',\lambda''}^{\lambda'''}$ is a holographic operator between $L^2_{\lambda'''}(\R^+)$ and $L^2_{\lambda',\lambda''}(\R^+\times \R^+)$. 
We summarize the framework in the two following commutative diagrams

 \[\xymatrix{
    L^2_{\lambda',\lambda''}(\R^+\times\R^+) \ar[rrr]^{\widehat{\RC}}\ar[d]_{\mathcal{F}_2}  &&& L^2_{\lambda'''}(\R^+) \ar[d]^{\mathcal{F}} \\
    H^2_{\lambda',\lambda''}(\Pi\times\Pi)\ar[rrr]_{\RC} &&& H^2_{\lambda'''}(\Pi)
  }  
  \]
  \begin{center}
  Symmetry breaking operators in the holomorphic and $L^2$-models
  \end{center}
  \[
   \xymatrix{
    L^2_{\lambda',\lambda''}(\R^+\times \R^+) \ar[d]_{\mathcal{F}_2}  &&& L^2_{\lambda'''} (\R^+)\ar[d]^{\mathcal{F}} \ar[lll]_{\Phi_{\lambda',\lambda''}^{\lambda'''} }\\
    H^2_{\lambda',\lambda''} (\Pi\times\Pi) &&& H^2_{\lambda'''}(\Pi)\ar[lll]^{\Psi_{\lambda',\lambda''}^{\lambda'''}}
  }\]
\begin{center}  
  Holographic operators in the holomorphic and $L^2$-models
\end{center}

\subsection{Inverse Laplace transform of the reproducing kernel}

The following lemma gives the inverse Laplace transform for the Bergman kernels $K_\lambda(\cdot,w)$.

\begin{lemme}\label{cor:Inv_Fourier_SL2}
Suppose $\lambda,~\lambda',~\lambda''>1$. The inverse Laplace transform of the reproducing kernel $K_\lambda$ is given by the following fomula:
\begin{equation}
\mathcal{F}^{-1}K_\lambda(\cdot,z)(t)=\frac{2^{\lambda-1}}{2\pi\Gamma(\lambda-1)}t^{\lambda-1}e^{-it\bar{z}}.
\end{equation}
Consequently:
\begin{equation}
\mathcal{F}_2^{-1}K_{\lambda',\lambda''}(\cdot,(w_1,w_2))(x,y)=\frac{2^{\lambda'+\lambda''-2}}{4\pi^2\Gamma(\lambda'-1)\Gamma(\lambda''-1)}x^{\lambda'-1}y^{\lambda''-1}e^{-i(x,y).(\overline{w_1},\overline{w_2})}.
\end{equation}
\end{lemme}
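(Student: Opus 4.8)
The plan is to verify the formula directly by producing the candidate function and computing its Laplace transform, relying on the fact that $\mathcal{F}$ is one-to-one (it is, up to the constant $b(\lambda)$, a Hilbert space isomorphism onto $H^2_\lambda(\Pi)$). Fix $z\in\Pi$ and set $g_z(t):=\frac{2^{\lambda-1}}{2\pi\Gamma(\lambda-1)}\,t^{\lambda-1}e^{-it\bar z}$ for $t>0$. First I would check that $g_z\in L^2_\lambda(\R^+)$: with the weight $t^{1-\lambda}$, the quantity $|g_z(t)|^2 t^{1-\lambda}$ is a constant multiple of $t^{\lambda-1}e^{-2t\,\mathrm{Im}(z)}$, which is integrable at $0$ since $\lambda>1$ and decays exponentially at $+\infty$ since $\mathrm{Im}(z)>0$; hence $\mathcal{F}g_z$ is a well-defined element of $H^2_\lambda(\Pi)$.

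Next I would compute $\mathcal{F}g_z$. For $w\in\Pi$ one has $\mathrm{Re}\bigl(-i(w-\bar z)\bigr)=\mathrm{Im}(w)+\mathrm{Im}(z)>0$, so the classical Gamma integral $\int_0^\infty t^{\lambda-1}e^{-st}\,dt=\Gamma(\lambda)\,s^{-\lambda}$ applies with $s=-i(w-\bar z)$, giving
\[
\mathcal{F}g_z(w)=\frac{2^{\lambda-1}\,\Gamma(\lambda)}{2\pi\Gamma(\lambda-1)}\bigl(-i(w-\bar z)\bigr)^{-\lambda}.
\]
Using $\tfrac1{2i}=-\tfrac i2$ one rewrites $-i(w-\bar z)=2\cdot\tfrac{w-\bar z}{2i}$; since $\tfrac{w-\bar z}{2i}$ has strictly positive real part, the relevant branch of the power function is the principal one, which is precisely the determination used in \eqref{eq:Noyau_bergmann}, so $\bigl(-i(w-\bar z)\bigr)^{-\lambda}=2^{-\lambda}\bigl(\tfrac{w-\bar z}{2i}\bigr)^{-\lambda}$. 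Collecting constants with $\Gamma(\lambda)=(\lambda-1)\Gamma(\lambda-1)$ yields $\mathcal{F}g_z(w)=\frac{\lambda-1}{4\pi}\bigl(\tfrac{w-\bar z}{2i}\bigr)^{-\lambda}=K_\lambda(w,z)$. By injectivity of $\mathcal{F}$ this gives $g_z=\mathcal{F}^{-1}K_\lambda(\cdot,z)$, which is the first identity. The ``consequently'' part then follows immediately from $\mathcal{F}_2=\mathcal{F}\otimes\mathcal{F}$ and the factorization $K_{\lambda',\lambda''}(\cdot,(w_1,w_2))=K_{\lambda'}(\cdot,w_1)\otimes K_{\lambda''}(\cdot,w_2)$, by applying the first part in each variable and multiplying the two constants; here $(x,y).(\overline{w_1},\overline{w_2})$ abbreviates $x\overline{w_1}+y\overline{w_2}$.

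I do not expect a genuine obstacle: the only points requiring care are the convergence of the Gamma integral, which holds because both $w$ and $z$ lie in $\Pi$; the bookkeeping of the factor $2^{\lambda}$ produced by $\tfrac1{2i}$; and the consistency of the branch of $\zeta\mapsto\zeta^{-\lambda}$ with the determination fixed in the definition of the Bergman kernel. All three become routine once one observes that $\tfrac{w-\bar z}{2i}$ lies in the open right half-plane, where the principal branch is holomorphic and multiplicative.
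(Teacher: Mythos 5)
Your proof is correct and follows essentially the same route as the paper: both define the candidate function $g_z(t)=\frac{2^{\lambda-1}}{2\pi\Gamma(\lambda-1)}t^{\lambda-1}e^{-it\bar z}$, evaluate its Laplace transform via the Gamma integral to recover $K_\lambda(\cdot,z)$, and deduce the two-variable statement from the factorization of $K_{\lambda',\lambda''}$. Your additional checks on square-integrability, convergence, and the branch of the power function are sound refinements of the same argument rather than a different method.
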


\begin{proof}
Consider the function $g(t)=\frac{2^{\lambda-1}}{2\pi\Gamma(\lambda-1)}t^{\lambda-1}e^{-it\bar{z}}$. Then, the usual Laplace transform gives:
\begin{align*}
\mathcal{F}g(z)=\frac{2^{\lambda-1}}{2\pi\Gamma(\lambda-1)}\int_0^\infty t^{\lambda-1}e^{it(w-\bar{z})}~dt=\frac{\lambda-1}{4\pi}\left(\frac{2i}{w-\bar{z}}\right)^\lambda=K_\lambda(w,z),
\end{align*}
which proves the first point. The second is obvious from the definitions.
\end{proof}

\section{Computation of the relative reproducing kernel}
The goal of this section is to give two different proofs of Theorem \ref{thm:Fomule_adjoint_RC_relative_reproducing}. For this, we use Lemma 3.13 from \cite{KP}:

 \begin{lemme}\label{lem:calc_adj}
 Let $D_j (j=1,2)$ be some complex manifolds, and $H_j$ some Hilbert spaces of holomorphic functions on $D_j$ with reproducing kernels $K^{(j)}(\cdot,\cdot)$. If $R\ :\ H_1 \to \ H_2$ is a continous linear map, then :
 \begin{enumerate}
 \item $\overline{RK^{(1)}(\cdot,\zeta)(\tau')}=(R^*K^{(2)}(\cdot,\tau'))(\zeta) $ pour $\zeta \in D_1,\tau'\in D_2$.
 \item $(R^*g)(\zeta)=(g,RK^{(1)}(\cdot,\zeta))_{H_2}$ for $g\in H_2,\ \zeta \in D_1$.
 \end{enumerate}
\end{lemme}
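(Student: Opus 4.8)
The plan is to derive the statement purely from two ingredients: the reproducing property of $K^{(1)}$ in $H_1$ and of $K^{(2)}$ in $H_2$, together with the definition of the Hilbert-space adjoint $R^*$ of the continuous operator $R$. No complex-analytic input is needed beyond the standard fact that $K^{(1)}(\cdot,\zeta)\in H_1$ and $K^{(2)}(\cdot,\tau')\in H_2$ are genuine elements of those spaces, which holds because the evaluation functionals on the $H_j$ are continuous. I would prove (2) first and then obtain (1) as the special case $g=K^{(2)}(\cdot,\tau')$.

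For (2), fix $g\in H_2$ and $\zeta\in D_1$. Since $R:H_1\to H_2$ is continuous, $R^*:H_2\to H_1$ is a well-defined bounded operator, so $R^*g\in H_1$. Applying the reproducing property of $K^{(1)}$ to this function at the point $\zeta$, and then the very definition of the adjoint, gives
\[
(R^*g)(\zeta)=(R^*g,\,K^{(1)}(\cdot,\zeta))_{H_1}=(g,\,RK^{(1)}(\cdot,\zeta))_{H_2},
\]
which is the asserted identity.

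For (1), fix $\zeta\in D_1$ and $\tau'\in D_2$ and apply (2) to $g=K^{(2)}(\cdot,\tau')\in H_2$:
\[
(R^*K^{(2)}(\cdot,\tau'))(\zeta)=(K^{(2)}(\cdot,\tau'),\,RK^{(1)}(\cdot,\zeta))_{H_2}.
\]
Using the conjugate-symmetry of the inner product and then the reproducing property of $K^{(2)}$ in $H_2$,
\[
(K^{(2)}(\cdot,\tau'),\,RK^{(1)}(\cdot,\zeta))_{H_2}=\overline{(RK^{(1)}(\cdot,\zeta),\,K^{(2)}(\cdot,\tau'))_{H_2}}=\overline{(RK^{(1)}(\cdot,\zeta))(\tau')},
\]
and combining the two displays gives statement (1).

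I do not expect a genuine obstacle here. The only points that deserve a line of care are that $R^*$ really is a bounded operator $H_2\to H_1$ — which is exactly what the continuity hypothesis on $R$ provides — and that $K^{(j)}(\cdot,\eta)\in H_j$, which is the defining feature of a reproducing kernel and is available because point evaluations on $H_j$ are continuous. Everything else is a two-line manipulation of inner products, and (1) is literally the instance $g=K^{(2)}(\cdot,\tau')$ of (2).
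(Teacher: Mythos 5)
Your proof is correct. The paper itself offers no proof of this lemma --- it is quoted as Lemma 3.13 of \cite{KP} and used as a black box --- but your two-step argument (the reproducing property of $K^{(1)}$ combined with the definition of the adjoint to get (2), then the specialization $g=K^{(2)}(\cdot,\tau')$ together with conjugate-symmetry and the reproducing property of $K^{(2)}$ to get (1)) is the standard derivation and is exactly what is needed; the only hypotheses used are the ones you flag, namely that $K^{(j)}(\cdot,\eta)\in H_j$ and that $R^*:H_2\to H_1$ is bounded.
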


Thanks to this lemma, we only need to compute $\RC \left(K_{\lambda',\lambda''}(\cdot,(w_1,w_2))\right)$ in order to find an explicit formula for the holographic operator. We give two different ways for this computation. First, we compute it directly and in a second time we use the Laplace transform to get our result.

\subsection{First proof of Theorem \ref{thm:Fomule_adjoint_RC_relative_reproducing}}\label{sec:Premiere_preuve}
Our first proof of Theorem \ref{thm:Fomule_adjoint_RC_relative_reproducing} reduces to a direct computation. 

\begin{lemme}
For $l\in \N$ and $\lambda >1$, we have:
\begin{equation}\label{eq:Partial_der_kernel}
\frac{\partial^l}{\partial z^l}K_{\lambda}(z,w)=\frac{(-1)^l(\lambda-1)_{l+1}}{4\pi (2i)^l}\left(\frac{z-\bar{w}}{2i}\right)^{-(\lambda+l)}.
\end{equation}
\end{lemme}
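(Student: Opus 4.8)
The plan is to prove the formula \eqref{eq:Partial_der_kernel} by induction on $l$, using the explicit expression \eqref{eq:Noyau_bergmann} for the Bergman kernel. For $l=0$ the claim reduces to $K_\lambda(z,w)=\frac{\lambda-1}{4\pi}\left(\frac{z-\bar w}{2i}\right)^{-\lambda}$, which is exactly \eqref{eq:Noyau_bergmann} (noting $(\lambda-1)_1=\lambda-1$ and $(2i)^0=1$). For the inductive step, I would differentiate the assumed formula for $\partial^l_z K_\lambda$ once more in $z$. Since $\frac{\partial}{\partial z}\left(\frac{z-\bar w}{2i}\right)^{-(\lambda+l)}=-(\lambda+l)\cdot\frac{1}{2i}\left(\frac{z-\bar w}{2i}\right)^{-(\lambda+l+1)}$, one gets
\[
\frac{\partial^{l+1}}{\partial z^{l+1}}K_\lambda(z,w)=\frac{(-1)^l(\lambda-1)_{l+1}}{4\pi(2i)^l}\cdot\left(-(\lambda+l)\right)\frac{1}{2i}\left(\frac{z-\bar w}{2i}\right)^{-(\lambda+l+1)}=\frac{(-1)^{l+1}(\lambda-1)_{l+1}(\lambda+l)}{4\pi(2i)^{l+1}}\left(\frac{z-\bar w}{2i}\right)^{-(\lambda+l+1)}.
\]
Then I would simply observe that $(\lambda-1)_{l+1}(\lambda+l)=(\lambda-1)_{l+2}$, which follows directly from the definition of the Pochhammer symbol $(\lambda-1)_{l+1}=(\lambda-1)\lambda\cdots(\lambda+l-1)$ together with the fact that the next factor in the product defining $(\lambda-1)_{l+2}$ is precisely $(\lambda-1)+(l+1)=\lambda+l$. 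This gives exactly the claimed formula with $l$ replaced by $l+1$, completing the induction.

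There is essentially no obstacle here: the computation is a one-line differentiation of a power function, and the only mildly delicate point is bookkeeping with the Pochhammer symbol and the powers of $2i$, together with the sign $(-1)^l$. One should also remark that since $z,w\in\Pi$, the quantity $\frac{z-\bar w}{2i}$ has positive imaginary part shifted — more precisely $\operatorname{Im}(z-\bar w)>0$, so $\frac{z-\bar w}{2i}$ lies in the right half-plane and the power function is unambiguously defined by the principal branch, which legitimises differentiating under the chosen determination. With this lemma in hand, the first proof of Theorem \ref{thm:Fomule_adjoint_RC_relative_reproducing} will proceed by plugging \eqref{eq:Partial_der_kernel} into the definition \eqref{eq:RC} of $R_{\lambda',\lambda''}^{\lambda'''}$ applied to the product kernel \eqref{eq:Noyau_produit_Bergman}, resumming the resulting finite sum (which will produce the factor $(w_2-w_1)^l$ and a Jacobi-type combinatorial identity), restricting to the diagonal, and then invoking Lemma \ref{lem:calc_adj}(2) to identify $(\RC)^*$.
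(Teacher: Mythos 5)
Your proof is correct and follows exactly the route the paper indicates: the paper simply states that the lemma ``can be proved by induction on $l$,'' and your argument supplies precisely that induction, with the base case given by \eqref{eq:Noyau_bergmann} and the inductive step reduced to one differentiation of a power function plus the Pochhammer identity $(\lambda-1)_{l+1}(\lambda+l)=(\lambda-1)_{l+2}$. The bookkeeping of signs and powers of $2i$ checks out, and your remark about the branch of the power function is a harmless (and reasonable) addition.
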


This statement can be proved by induction on $l$. Then we are ready to prove Theorem \ref{thm:Fomule_adjoint_RC_relative_reproducing}.
\begin{proof}[Proof of Theorem \ref{thm:Fomule_adjoint_RC_relative_reproducing}]
\begin{align*}
&R_{\lambda',\lambda''}^{\lambda'''}K_{\lambda',\lambda''}(\cdot,(w_1,w_2))(z_1,z_2)\\
&=\sum_{j=0}^l \frac{(-1)^j(\lambda'+l-j)_j(\lambda''+j)_{l-j}}{j!(l-j)!}\frac{\partial^l}{\partial z_1^{l-j} \partial z_2^j}K_{\lambda',\lambda''}(\cdot,(w_1,w_2))(z_1,z_2)\\
&=\frac{(-1)^l}{(4\pi)^2l!(2i)^l}\left(\frac{z_1-\overline{w_1}}{2i}\right)^{-(\lambda'+l)}\left(\frac{z_2-\overline{w_2}}{2i}\right)^{-(\lambda''+l)}\\
&\times \sum_{j=0}^l(-1)^j(\lambda'+l-j)_j(\lambda''+j)_{l-j}(\lambda'-1)_{l-j+1}(\lambda''-1)_{j+1}\binom{l}{j}\left(\frac{z_2-\overline{w_2}}{2i}\right)^{l-j}\left(\frac{z_1-\overline{w_1}}{2i}\right)^{j}\\
&=\frac{(-1)^l(\lambda'-1)_{l+1}(\lambda''-1)_{l+1}}{(4\pi)^2 l!(2i)^{2l}}\left(\frac{z_1-\overline{w_1}}{2i}\right)^{-(\lambda'+l)}\left(\frac{z_2-\overline{w_2}}{2i}\right)^{-(\lambda''+l)}\\
&\times \sum_{j=0}^l \binom{l}{j}(\overline{w_1}-z_1)^j (z_2-\overline{w_2})^{l-j}\\
&=\frac{(\lambda'-1)_{l+1}(\lambda''-1)_{l+1}}{(4\pi)^2 l!2^{2l}}\left(\frac{z_1-\overline{w_1}}{2i}\right)^{-(\lambda'+l)}\left(\frac{z_2-\overline{w_2}}{2i}\right)^{-(\lambda''+l)}(z_1-z_2+\overline{w_2-w_1})^l.
\end{align*}
Finally, we get by restriction to the diagonal $z_1=z_2=z$:
\begin{align*}
&\overline{\RC K_{\lambda',\lambda''}(\cdot,(w_1,w_2))(z)}=\overline{Rest|_{z_1=z_2=z}\circ R_{\lambda',\lambda''}^{\lambda'''}\left(K_{\lambda',\lambda''}(\cdot,(w_1,w_2))\right)(z)}\\
&=\frac{(\lambda'-1)_{l+1}(\lambda''-1)_{l+1}}{(4\pi)^2 l!2^{2l}}\left(\frac{w_1-\overline{z}}{2i}\right)^{-(\lambda'+l)}\left(\frac{w_2-\overline{z}}{2i}\right)^{-(\lambda''+l)}(w_2-w_1)^l.
\end{align*}
Thus Lemma \ref{lem:calc_adj} implies the statement.
\end{proof}

\subsection{Second proof of Theorem \ref{thm:Fomule_adjoint_RC_relative_reproducing}}\label{sec:Deuxieme_preuve}

Here we give a second proof of Theorem \ref{thm:Fomule_adjoint_RC_relative_reproducing} using the Laplace transform (\ref{eq:fourier_transform_SL2}).

\begin{prop}\label{prop:Pesudo_diff_kernel}
Set $\lambda',\lambda'',\lambda'''$ satisfying \eqref{condition}.
\begin{align}\label{eq:Relative_kernel_pseudodiff}
&\overline{\RC K_{\lambda',\lambda''}(\cdot,(w_1,w_2))(z)}=\\ 
&C'(w_2-w_1)^l\int_0^\infty \mathcal{F}^{-1}K_{\lambda'''}(\cdot,z)(t)~_1F_1(\lambda'+l,\lambda'+\lambda''+2l;-i(w_2-w_1)t)e^{itw_2}~dt, \nonumber
\end{align}
where $C'=\frac{\Gamma(\lambda'+\lambda''+2l-1)B(\lambda'+l,\lambda''+l)}{2^{2l+2}\pi l!\Gamma(\lambda'-1)\Gamma(\lambda''-1)}$ and $_1F_1$ is the Kummer function (see Appendix \ref{sec:hypergeo}).\\
\end{prop}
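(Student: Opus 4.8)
\textbf{Proof plan for Proposition \ref{prop:Pesudo_diff_kernel}.}
The plan is to transport the computation of $\RC K_{\lambda',\lambda''}(\cdot,(w_1,w_2))$ to the $L^2$-model via the Laplace transform, where the Rankin--Cohen operator becomes the explicit Jacobi-type integral operator $\widehat{\RC}$ of \eqref{eq:RC_hat}. First I would apply Lemma \ref{cor:Inv_Fourier_SL2} to write $\mathcal{F}_2^{-1}K_{\lambda',\lambda''}(\cdot,(w_1,w_2))(x,y)$ explicitly as a constant times $x^{\lambda'-1}y^{\lambda''-1}e^{-i(x\overline{w_1}+y\overline{w_2})}$. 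Then, since $\RC=\mathcal{F}\circ\widehat{\RC}\circ\mathcal{F}_2^{-1}$ by \eqref{eq:RC_hat_def}, I would feed this function into formula \eqref{eq:RC_hat}, which turns the two-variable function into $\frac{t^{l+1}}{2i^l}\int_{-1}^1 P_l^{\lambda'-1,\lambda''-1}(v)\,\bigl(\tfrac{t}{2}(1-v)\bigr)^{\lambda'-1}\bigl(\tfrac{t}{2}(1+v)\bigr)^{\lambda''-1}e^{-\frac{it}{2}[(1-v)\overline{w_1}+(1+v)\overline{w_2}]}\,dv$, up to the constant from Lemma \ref{cor:Inv_Fourier_SL2}.

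Next I would evaluate the $v$-integral. Pulling out the $v$-independent exponential factor $e^{-\frac{it}{2}(\overline{w_1}+\overline{w_2})}$ and writing the remaining exponent as $\frac{it}{2}v(\overline{w_1}-\overline{w_2})$, the integral becomes $\bigl(\tfrac{t}{2}\bigr)^{\lambda'+\lambda''-2}\int_{-1}^1 P_l^{\lambda'-1,\lambda''-1}(v)(1-v)^{\lambda'-1}(1+v)^{\lambda''-1}e^{\frac{it}{2}v(\overline{w_1}-\overline{w_2})}\,dv$. The key identity here is the classical integral representation expressing such a Jacobi-weight Fourier--type integral of a Jacobi polynomial as a confluent hypergeometric function; concretely, $\int_{-1}^1 (1-v)^{\alpha}(1+v)^{\beta}P_l^{\alpha,\beta}(v)e^{isv}\,dv$ is, up to Gamma-function constants and a power of $s$ (or rather an exponential $e^{\pm is}$), a multiple of ${}_1F_1(\alpha+l+1,\alpha+\beta+2l+2;\mp 2is)$. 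Matching $\alpha=\lambda'-1$, $\beta=\lambda''-1$, $s=\frac{t}{2}(\overline{w_1}-\overline{w_2})$, this produces the factor ${}_1F_1(\lambda'+l,\lambda'+\lambda''+2l;-i(w_2-w_1)t)$ once one combines the leftover exponential with $e^{-\frac{it}{2}(\overline{w_1}+\overline{w_2})}$ to get $e^{itw_2}$ after complex conjugation at the end. I would then collect the Beta function $B(\lambda'+l,\lambda''+l)$ and the $\Gamma(\lambda'+\lambda''+2l-1)$ appearing from that representation, together with the $\frac{1}{2i^l}$, the powers of $2$, and the constant $\frac{2^{\lambda'+\lambda''-2}}{4\pi^2\Gamma(\lambda'-1)\Gamma(\lambda''-1)}$ from Lemma \ref{cor:Inv_Fourier_SL2}, into the single constant $C'$, checking it matches $\frac{\Gamma(\lambda'+\lambda''+2l-1)B(\lambda'+l,\lambda''+l)}{2^{2l+2}\pi l!\Gamma(\lambda'-1)\Gamma(\lambda''-1)}$.

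Finally, I would recognize that $\mathcal{F}$ applied to the resulting function of $t$ is exactly $\int_0^\infty (\cdots)(t)e^{itw}\,dt$; splitting off $\mathcal{F}^{-1}K_{\lambda'''}(\cdot,z)(t)=\frac{2^{\lambda'''-1}}{2\pi\Gamma(\lambda'''-1)}t^{\lambda'''-1}e^{-it\bar z}$ from the remaining powers $t^{l+1}\cdot t^{\lambda'+\lambda''-2}=t^{\lambda'''-1}$ (using $\lambda'''=\lambda'+\lambda''+2l$) lets me write the whole thing as the claimed integral $C'(w_2-w_1)^l\int_0^\infty \mathcal{F}^{-1}K_{\lambda'''}(\cdot,z)(t)\,{}_1F_1(\lambda'+l,\lambda'+\lambda''+2l;-i(w_2-w_1)t)e^{itw_2}\,dt$, taking the overall complex conjugate at the very end to account for the bar on the left-hand side. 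The main obstacle I expect is the bookkeeping in the second step: pinning down the precise form of the Jacobi-polynomial Fourier integral (the exact hypergeometric parameters, the sign and placement of the exponential $e^{\pm is}$ versus the ${}_1F_1$ argument, and whether an Euler-type transformation of ${}_1F_1$ is needed to match the stated parameters $(\lambda'+l,\lambda'+\lambda''+2l)$), and then verifying that all the scattered constants genuinely collapse to $C'$; the rest is routine substitution and the identity $\lambda'''=\lambda'+\lambda''+2l$.
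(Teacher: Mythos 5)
Your plan is correct and follows essentially the same route as the paper: conjugate by the Laplace transform, feed the explicit $\mathcal{F}_2^{-1}K_{\lambda',\lambda''}$ from Lemma \ref{cor:Inv_Fourier_SL2} into \eqref{eq:RC_hat}, and evaluate the resulting Jacobi-weighted Fourier integral via the Kummer-function identity, which is precisely the paper's Lemma \ref{lem:Fourier_poids_Kummer}. The only caveat is in your last step: $t^{l+1}\cdot t^{\lambda'+\lambda''-2}=t^{\lambda'+\lambda''+l-1}$, not $t^{\lambda'''-1}$; the missing $t^{l}$ comes from the $x^{l}$ prefactor in that Kummer identity (with $x\propto t$), consistent with the ``power of $s$'' you flagged.
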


\begin{proof}
Lemmas \ref{cor:Inv_Fourier_SL2} and \ref{lem:Fourier_poids_Kummer}, and formula (\ref{eq:RC_hat}) imply:
\begin{align*}
&\widehat{\RC}\circ \mathcal{F}_2^{-1}\left(K_{\lambda',\lambda''}(\cdot,(w_1,w_2))\right)(t)\\
&=\frac{t^{\lambda'+\lambda''+l-1}}{2^3i^l\pi^2\Gamma(\lambda'-1)\Gamma(\lambda''-1)} \int_{-1}^1\Jacobi(v) e^{-it\overline{\frac{w_1+w_2}{2}+v\frac{w_2-w_1}{2}}}(1-v)^{\lambda'-1}(1+v)^{\lambda''-1}~dv\\
&=\frac{2^{\lambda'+\lambda''-4}B(\lambda'+l,\lambda''+l)}{\pi^2\Gamma(\lambda'-1)\Gamma(\lambda''-1)l!}(\overline{w_2}-\overline{w_1})^l t^{\lambda'''-1}~_1F_1\left(\lambda'+l,\lambda'+\lambda''+2l;i(\overline{w_2}-\overline{w_1})t\right) e^{-it\overline{w_2}}.
\end{align*}
As $\RC=\mathcal{F} \circ \widehat{\RC}\circ \mathcal{F}_2^{-1}$ (see \eqref{eq:fourier_transform_SL2}), it gives:
\begin{align*}
&\overline{\RC K_{\lambda',\lambda''}(\cdot,(w_1,w_2))}(z)\\
&=C'(w_2-w_1)^l\int_{\R^+} \frac{2^{\lambda'''-1}}{2\pi\Gamma(\lambda'''-1)}t^{\lambda'''-1}e^{itz}~_1F_1\left(\lambda'+l,\lambda'+\lambda''+2l;i(w_1-w_2)t\right) e^{-itw_2}~dt.
\end{align*}
Lemma \ref{cor:Inv_Fourier_SL2} gives $\frac{2^{\lambda'''-1}}{2\pi\Gamma(\lambda'''-1)}t^{\lambda'''-1}e^{-it\bar{z}}=\mathcal{F}^{-1}K_{\lambda'''}(\cdot,z)(t)$, which ends our proof.
\end{proof}

An alternative proof of Theorem \ref{thm:Fomule_adjoint_RC_relative_reproducing} is based on the properties of Kummer special functions as follows.

\begin{proof}[Second proof of Theorem \ref{thm:Fomule_adjoint_RC_relative_reproducing}]
First, remark that, for every $n\in \N$, we have:
\[\left(\frac{\partial}{\partial w}\right)^nK_{\lambda'''}(w,z)=\frac{(-1)^n(\lambda'''-1)(\lambda''')_n(2i)^{\lambda'''}}{4\pi}(w-\bar{z})^{-\lambda'''-n}.\]
Then, we use the power series expansion for Kummer functions \eqref{def:hypergeo} to get: 
\begin{align*}
&\int_0^\infty \mathcal{F}^{-1}\left(K_{\lambda'''}(\cdot,z)\right)(t)e^{itw_2}~_1F_1(\lambda'+l,\lambda'+\lambda''+2l;-i(w_2-w_1)t)~dt\\
&=\sum_{n=0}^\infty \frac{(\lambda'+l)_n}{(\lambda'+\lambda''+2l)_n}\int_0^\infty \mathcal{F}^{-1}\left(K_{\lambda'''}(\cdot,z)(t)\right)e^{itw_2}(-i(w_2-w_1)t)^n e^{itw_2}~dt\\
&=\sum_{n=0}^\infty \frac{(\lambda'+l)_n}{(\lambda'+\lambda''+2l)_n} (w_2-w_1)^n \left. \left( \frac{\partial}{\partial w}\right)^n K_{\lambda'''}(w,z) \right|_{w=w_2}\\
&=\frac{\lambda'''-1}{4\pi} \sum_{n=0}^\infty (\lambda'+l)_n(-1)^n (2i)^{\lambda'''}(w_2-\bar{z})^{-\lambda'''-n}(w_2-w_1)^n\\
&=\frac{\lambda'''-1}{4\pi}\left(\frac{w_2-\bar{z}}{2i} \right)^{-\lambda'''} \sum_{n=0}^\infty (\lambda'+l)_n(-1)^n \left( \frac{w_2-w_1}{w_2-\bar{z}}\right)^n\\
&=\frac{\lambda'''-1}{4\pi}\left(\frac{w_2-\bar{z}}{2i} \right)^{-\lambda'''} \left( 1-\frac{w_2-w_1}{w_2-\bar{z}}\right)^{-\lambda'-l}\\
&=\frac{\lambda'''-1}{4\pi}\left(\frac{w_2-\bar{z}}{2i} \right)^{-\lambda''-l}\left(\frac{w_1-\bar{z}}{2i} \right)^{-\lambda'-l} .
\end{align*}
Notice that the result is valid for $\left|\frac{w_2-w_1}{w_2-\bar{z}}\right|<1$, and use the analytic continuation to extend  it for arbitrary $w_1, w_2, z \in \Pi$.
Proposition \ref{prop:Pesudo_diff_kernel}, implies the result.
Finally, for the constant $C(\lambda',\ \lambda'')$, we have:
\begin{align*}
C(\lambda',\ \lambda'')=C'\frac{(\lambda'''-1)}{4\pi l! }
=\frac{(\lambda'''-1)\Gamma(\lambda'''-1)\Gamma(\lambda'+l)\Gamma(\lambda''+l)}{2^{2l+4}\pi^2 l!\Gamma(\lambda'-1)\Gamma(\lambda''-1)\Gamma(\lambda''')}.
\end{align*}
\end{proof}

\subsection{Link with the operator $\Psi_{\lambda',\lambda''}^{\lambda'''}$}

We give another characterization of the operators $\Psi_{\lambda',\lambda''}^{\lambda'''}$ (see \eqref{eq:Operator_holographic_holomorphe}) as follows

\begin{prop}\label{prop:Link_Psi_Relative_kernel}
Suppose $\lambda',~\lambda'',~\lambda'''$ verify \eqref{condition}. Then we have the following identity:
\begin{equation}\label{eq:Link_Psi_Relative_kernel}
\Psi_{\lambda',\lambda''}^{\lambda'''}(K_{\lambda'''}(\cdot,z))(w_1,w_2)=\frac{(-1)^l(\lambda'''-1)B(\lambda'+l,\lambda''+l)}{4\pi l!}K_{\lambda',\lambda''}^{\lambda'''}(z,w_1,w_2).
\end{equation}
where $B(x,y)$ denotes the usual Euler beta function, and $K_{\lambda',\lambda''}^{\lambda'''}$ is the relative reproducing kernel \eqref{eq:Relative_reproducing} given by 
\[K_{\lambda',\lambda''}^{\lambda'''}(z,w_1,w_2)=(w_2-w_1)^l\left(\frac{w_1-\bar{z}}{2i}\right)^{-(\lambda'+l)}\left(\frac{w_2-\bar{z}}{2i}\right)^{-(\lambda''+l)}.\]
\end{prop}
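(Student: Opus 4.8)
The plan is to compute $\Psi_{\lambda',\lambda''}^{\lambda'''}(K_{\lambda'''}(\cdot,z))$ directly from the integral formula \eqref{eq:Operator_holographic_holomorphe} and recognize the result as a multiple of the relative reproducing kernel \eqref{eq:Relative_reproducing}. First I would substitute $g(w)=K_{\lambda'''}(w,z)=\frac{\lambda'''-1}{4\pi}\left(\frac{w-\bar z}{2i}\right)^{-\lambda'''}$ into \eqref{eq:Operator_holographic_holomorphe}, with $w=w(v)=\frac12((w_2-w_1)v+(w_2+w_1))$, so that
\[
\Psi_{\lambda',\lambda''}^{\lambda'''}(K_{\lambda'''}(\cdot,z))(w_1,w_2)=\frac{(w_1-w_2)^l(\lambda'''-1)}{2^{\lambda'+\lambda''+2l-1}l!\,4\pi}\int_{-1}^1\left(\frac{w(v)-\bar z}{2i}\right)^{-\lambda'''}(1-v)^{\lambda'+l-1}(1+v)^{\lambda''+l-1}\,dv.
\]
The key algebraic step is to rewrite the factor $w(v)-\bar z$ in a form linear in $v$ whose endpoints at $v=\pm1$ are $w_2-\bar z$ and $w_1-\bar z$; explicitly $w(v)-\bar z=\frac12\big((w_1-\bar z)(1-v)+(w_2-\bar z)(1+v)\big)$. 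After pulling out appropriate powers, the integral becomes, up to constants, $\int_{-1}^1 (1-v)^{\lambda'+l-1}(1+v)^{\lambda''+l-1}\big(a(1-v)+b(1+v)\big)^{-\lambda'''}\,dv$ with $a=\frac{w_1-\bar z}{2}$, $b=\frac{w_2-\bar z}{2}$, which after the change of variable $v\mapsto \frac{1-v}{2}$ (mapping to $[0,1]$) is an Euler-type integral representation of ${}_2F_1$ — but since $\lambda'''=\lambda'+\lambda''+2l$ the hypergeometric parameters collapse and one obtains simply $a^{-(\lambda'+l)}b^{-(\lambda''+l)}$ times a Beta function $B(\lambda'+l,\lambda''+l)$.

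Concretely, the identity I would invoke is $\int_0^1 t^{p-1}(1-t)^{q-1}(a t+b(1-t))^{-(p+q)}\,dt=B(p,q)\,a^{-q}b^{-p}$ (valid for $\mathrm{Re}\,p,\mathrm{Re}\,q>0$ and $a,b$ in a half-plane avoiding the cut), applied with $p=\lambda''+l$, $q=\lambda'+l$, $a=\frac{w_2-\bar z}{2i}$, $b=\frac{w_1-\bar z}{2i}$ after writing $2i$-normalized factors. This yields
\[
\int_{-1}^1\left(\frac{w(v)-\bar z}{2i}\right)^{-\lambda'''}(1-v)^{\lambda'+l-1}(1+v)^{\lambda''+l-1}\,dv=2^{\lambda'+\lambda''+2l-1}B(\lambda'+l,\lambda''+l)\left(\frac{w_1-\bar z}{2i}\right)^{-(\lambda'+l)}\left(\frac{w_2-\bar z}{2i}\right)^{-(\lambda''+l)},
\]
where the power of $2$ comes from the change of variables $dv = 2\,dt$ together with the factors $(1-v)=2t$, $(1+v)=2(1-t)$. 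Plugging this back and using $(w_1-w_2)^l=(-1)^l(w_2-w_1)^l$ collects precisely the constant $\frac{(-1)^l(\lambda'''-1)B(\lambda'+l,\lambda''+l)}{4\pi l!}$ in front of $K_{\lambda',\lambda''}^{\lambda'''}(z,w_1,w_2)$, which is the claimed identity.

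The main obstacle is purely a bookkeeping/convergence matter rather than a conceptual one: I must check that the Beta-type integral formula applies, i.e. that the linear combination $a(1-v)+b(1+v)$ stays off the branch cut of the power function as $v$ ranges over $[-1,1]$. Since $w_1,w_2\in\Pi$ and $z\in\Pi$, both $w_1-\bar z$ and $w_2-\bar z$ lie in the upper half-plane (their imaginary parts are $\mathrm{Im}\,w_i+\mathrm{Im}\,z>0$), and the segment between them — being a convex combination — also lies in $\Pi$, so the determination of the powers used in defining $H^2_\lambda(\Pi)$ and $K_\lambda$ is consistent throughout and the integral converges absolutely because $\lambda'+l-1,\lambda''+l-1>-1$ under \eqref{condition}. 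Once this is verified, everything reduces to the elementary Euler integral evaluation and a tally of constants.
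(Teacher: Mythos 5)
Your argument is correct, and it takes a genuinely different route from the paper's. The paper's proof inserts the inverse Laplace transform of $K_{\lambda'''}(\cdot,z)$ into \eqref{eq:Operator_holographic_holomorphe}, exchanges the order of integration, and reuses the Kummer-function computation of Proposition \ref{prop:Pesudo_diff_kernel} to identify $\Psi_{\lambda',\lambda''}^{\lambda'''}(K_{\lambda'''}(\cdot,z))$ with an explicit multiple of $\overline{\RC K_{\lambda',\lambda''}(\cdot,(w_1,w_2))(z)}$, then appeals to the already established Theorem \ref{thm:Fomule_adjoint_RC_relative_reproducing} to make the kernel explicit; as a by-product this recovers the proportionality \eqref{eq:link_Psi_dualRC} between $\Psi_{\lambda',\lambda''}^{\lambda'''}$ and $(\RC)^*$, which is the point of the remark that follows. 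You instead evaluate the one-dimensional integral directly: the decomposition $w(v)-\bar z=\frac12\bigl((w_1-\bar z)(1-v)+(w_2-\bar z)(1+v)\bigr)$ is correct, the collapse $\lambda'''=\lambda'+\lambda''+2l$ does reduce the integral to a pure Beta function, the power of $2$ exactly cancels the $2^{\lambda'+\lambda''+2l-1}$ in \eqref{eq:Operator_holographic_holomorphe}, and the branch-cut/convergence discussion is adequate (each $\frac{w_i-\bar z}{2i}$ lies in the open right half-plane, hence so does every convex combination, and $\lambda'+l-1,\ \lambda''+l-1>0$). Your route is more elementary and self-contained --- no Laplace transform, no Kummer functions, no reliance on Theorem \ref{thm:Fomule_adjoint_RC_relative_reproducing} --- at the cost of not exhibiting the link with $(\RC)^*$ along the way.

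One bookkeeping slip to fix: the auxiliary identity should read
\begin{equation*}
\int_0^1 t^{p-1}(1-t)^{q-1}\bigl(at+b(1-t)\bigr)^{-(p+q)}\,dt=B(p,q)\,a^{-p}b^{-q},
\end{equation*}
with the exponent $p$ attached to the coefficient $a$ of $t$ (test $p=2$, $q=1$). Under $t=\frac{1-v}{2}$ the correct matching is $p=\lambda'+l$, $a=\frac{w_1-\bar z}{2i}$, $q=\lambda''+l$, $b=\frac{w_2-\bar z}{2i}$, which yields exactly the displayed evaluation you wrote; so your displayed formula and the final constant are right, and only the intermediate parameter assignment in the prose is off.
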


\begin{proof}
Similar argument as in the proof of Proposition \ref{prop:Pesudo_diff_kernel} gives:
\begin{align*}
&\Psi_{\lambda',\lambda''}^{\lambda'''}(K_{\lambda'''}(\cdot,z))(w_1,w_2)\\
&=\frac{(w_1-w_2)^l}{2^{\lambda'''-1}l!}\int_{-1}^1 \left(\int_0^\infty \mathcal{F}^{-1}K_{\lambda'''}(\cdot,z)(t)e^{itw(v)}~dt\right)(1-v)^{\lambda'+l-1}(1+v)^{\lambda''+l-1}~dv\\
&=\frac{2^{2l+2}\pi(-1)^l\Gamma(\lambda'-1)\Gamma(\lambda''-1)}{\Gamma(\lambda'''-1)}\overline{\RC K_{\lambda',\lambda''}(\cdot,(w_1,w_2))(z)}.
\end{align*}
\end{proof}
\textit{Remark:} This proposition gives another proof of the link between $\Psi_{\lambda',\lambda''}^{\lambda'''}$ and $(\RC)^*$ (see formula \eqref{eq:link_Psi_dualRC}). 
Indeed, for every $g\in H_{\lambda'''}(\Pi)$:
\begin{align*}
&\Psi_{\lambda',\lambda''}^{\lambda'''}(g)(w_1,w_2)\\
&=\frac{(w_1-w_2)^l}{2^{\lambda'+\lambda''+2l-1}l!}\int_{-1}^1g(w(v))(1-v)^{\lambda'+l-1}(1+v)^{\lambda''+l-1}~dv\\
&=\frac{(w_1-w_2)^l}{2^{\lambda'+\lambda''+2l-1}l!}\int_{-1}^1 \int_\Pi g(z)\overline{K_{\lambda'''}(z,w(v))}y^{\lambda'''-2}(1-v)^{\lambda'+l-1}(1+v)^{\lambda''+l-1}~dxdydv\\
&=\int_\Pi g(z)\left( \int_{-1}^1 \frac{(w_1-w_2)^l}{2^{\lambda'+\lambda''+2l-1}l!}K_{\lambda'''}(w(v),z)(1-v)^{\lambda'+l-1}(1+v)^{\lambda''+l-1}dv\right) y^{\lambda'''-2}~dxdy\\
&=\int_\Pi g(z) \Psi_{\lambda',\lambda''}^{\lambda'''}(K_{\lambda'''}(\cdot,z))(w_1,w_2)~y^{\lambda'''-2}~dxdy.\\
\end{align*}

\section{Geometrical interpretation}\label{sec:Diag_comm_SL2}

In this section, we give a geometrical interpretation of the diffeomorphism $\iota$ between $\R^+\times(-1;1)$ and $\R^+\times \R^+$ introduced in \cite{KP} (equation (2.21), p.17), and describe yet another model for the tensor product $\pi_{\lambda'}\otimes\pi_{\lambda''}$.
\subsection{Cone stratification}

Let $\sigma$ be an involutive automorphism of a connected semi-simple Lie group $G$, which commutes with the Cartan involution $\theta$ of $G$. We use the same letters $\sigma$ and $\theta$ to denote their differentials. Define a $\theta$-stable subgroup of $G$ by
\begin{equation}
G^\sigma=\{g\in G~|~\sigma(g)=g\}.
\end{equation}
Consider $K$ a maximal compact subgroup of $G$ and $\mathfrak{g}=\mathfrak{k}+\mathfrak{p}$ a Cartan decomposition of $\mathfrak{g}=\operatorname{Lie(G)}$. The space $\mathfrak{p}$ is $\sigma$-stable, so we have the following diffeomorphism for the symmetric space $G/K$:
\begin{equation}\label{eq:diffeo}
G/K \simeq \mathfrak{p}=\mathfrak{p}^\sigma \oplus \mathfrak{p}^{-\sigma},
\end{equation}
where $\mathfrak{p}^{\pm \sigma}=\{X\in \mathfrak{p}~|~\sigma(X)=\pm X\}$. This diffeomorphism is explicit using the exponential map (see \cite{Helga}, Thm.1.1, p.252):
\begin{equation}
X+Y\in \mathfrak{p}^\sigma \oplus \mathfrak{p}^{-\sigma} \mapsto \exp(X+Y)K\in G/K.
\end{equation}
The automorphism $\sigma \theta$ is also involutive, so one checks that:
\begin{equation}\label{eq:diffeo_parti}
\mathfrak{p}^\sigma \simeq G^\sigma/K^\sigma \text{ and }\mathfrak{p}^{-\sigma}\simeq G^{\sigma\theta}/K^{\sigma\theta}.
\end{equation}
Finally, \eqref{eq:diffeo} and \eqref{eq:diffeo_parti} gives the following diffeomorphism for $G/K$:
\begin{equation}\label{eq:symmetric_decomp_final}
G/K\simeq G^\sigma/K^\sigma \times G^{\sigma\theta}/K^{\sigma\theta},
\end{equation}
where the diffeomorphism for the right-hand side is explicitly given by
\begin{equation}
X+Y\in \mathfrak{p}^\sigma \oplus \mathfrak{p}^{-\sigma} \mapsto (\exp(X)K^\sigma,\exp(Y)K^{\sigma\theta})\in G^\sigma/K^\sigma\times G^{\sigma\theta}/K^{\sigma\theta}.
\end{equation}

\begin{example}
Consider the case where $G=\left\lbrace \begin{pmatrix}
a&0\\0&a^{-1}
\end{pmatrix}; a\in \R^*\right\rbrace\times \left\lbrace\begin{pmatrix}
b&0\\0&b^{-1}
\end{pmatrix}; b\in \R^*\right\rbrace$ $($which is the structure group of the Jordan algebra $\R\times\R$ $)$, and $\sigma(g_1,g_2)=(g_2,g_1)$ with $g=(g_1,g_2)\in G$. Then, $G^\sigma/K^\sigma\simeq \R^+$ is diagonaly embeded in $G/K\simeq \R^+\times \R^+$, and $G^{\sigma\theta}/K^{\sigma\theta}\simeq (-1;1)$.

The following diffeomorphism $\iota$ from $\R^+\times(-1;1)$ to $\R^+\times\R^+$: 
\begin{equation}\label{eq:diffeo_SL2}
\iota:(t,v)\mapsto \iota(t,v)\equiv(\frac{t}{2}(1-v),\frac{t}{2}(1+v))
\end{equation}
is then an explicit realization of the decomposition \eqref{eq:symmetric_decomp_final}.
\end{example}
This diffeomorphism corresponds to the diagonal embedding of $\R^+$ into $\R^+\times\R^+$, and describes the stratification of the cone $\R^+\times\R^+$ by the segments orthogonal to this diagonal. In his recent preprint \cite{JL.Clerc}, J-L. Clerc gives a generalization of this construction for the diagonal embedding of a symmetric cone $\Omega$ into the direct product $\Omega\times \Omega$ (see \cite{JL.Clerc}) for an arbitrary Euclidean Jordan algebra.\\

Let $\alpha,\beta>1$, and $d\mu_{\alpha,\beta}(v):=(1-v)^{\alpha-1}(1+v)^{\beta-1}~dv$ be a measure on the segment $(-1,1)$.
We define the Hilbert spaces 
\[L^2_{\alpha,\beta}(\R^+\times(-1;1)):=L^2(\R^+\times(-1;1),t^{\alpha+\beta -1}~dtd\mu_{\alpha,\beta}(v)).\]

For a function $f$ defined on $\R^+\times \R^+$, we define a function $T_\iota (f)$ on $\R^+\times (-1,1)$ by
\begin{equation}
T_\iota (f)(t,v)= \left(\frac{t}{2}\right)^{2-\lambda'-\lambda''}(1-v)^{1-\lambda'}(1+v)^{1-\lambda''}\cdot f\circ \iota (t,v).
\end{equation}
One checks that it is a one-to-one isometry from $L^2(\R^+\times \R^+,x^{1-\lambda'}y^{1-\lambda''}~dxdy)$ to $ L^2_{\lambda',\lambda''}(\R^+\times(-1;1))$. The inverse map is given by the following formula: 
\begin{equation}
T_{\iota}^{-1}(h)(x,y)=x^{\lambda'-1}y^{\lambda''-1}\cdot h\circ \iota^{-1}(x,y).
\end{equation}
We use this operator in order to define the \emph{stratified $L^2$-model} by transfering the restriction of the outer product of holomorphic discrete series representations of $\widetilde{SL_2(\R)}$ on the space $L^2_{\lambda',\lambda''}(\R^+\times (-1;1))$. The group $\widetilde{SL_2(\R)}$ acts on  $L^2_{\lambda',\lambda''}(\R^+\times (-1;1))$ by the operators:
\begin{equation}\label{eq:stratified_model}
T_\theta \circ \widehat{\pi_{\lambda',\lambda''}(g)}\circ T_{\theta}^{-1},
\end{equation}
where $g\in \widetilde{SL_2(\R)}$ and $\widehat{\pi_{\lambda',\lambda''}(g)}=\mathcal{F}_2^{-1}\circ \pi_{\lambda'}\hat{\otimes}\pi_{\lambda''}(g,g) \circ \mathcal{F}_2$. 

Finaly, we introduce the map $\Theta$ for any function defined on $\R^+$:
\begin{equation}
\Theta(h)(t,v)= t^{-(\lambda'+\lambda''+l-1)}\frac{P_l^{\lambda'-1,\lambda''-1}(v)}{\|P_l^{\lambda'-1,\lambda''-1}\|}h(t).
\end{equation}
This is a one-to-one isometry from $L^2_{\lambda'''}(\R^+)$ to the Hilbert space 
\begin{equation}
V_l^{\lambda',\lambda''}:=L^2\left(\R^+ ,t^{\lambda'+\lambda''-1}~dt\right)\hat{\otimes}~ \C\cdot P^{\lambda'-1,\lambda''-1}_l(v),\end{equation}
whose inverse is given by:
\begin{equation}
\Theta^{-1}(f\times P_l^{\lambda'-1,\lambda''-1})(t)=\|P_l^{\lambda'-1,\lambda''-1}\|t^{\lambda'+\lambda''+l-1}f(t),
\end{equation}
where $\|P_l^{\lambda'-1,\lambda''-1}\|=\left(\frac{2^{\alpha+\beta+1}\Gamma(l+\alpha+1)\Gamma(l+\beta+1)}{l!(2l+\alpha+\beta+1)\Gamma(l+\alpha+\beta+1)}\right)^{\frac{1}{2}}$ is the norm of Jacobi polynomials in the Hilbert space $L^2\left((-1;1),d\mu_{\lambda',\lambda''}(v)\right)$.

Analogously to \eqref{eq:stratified_model}, we use this map to transfer the holomorphic discrete series representation of $\widetilde{SL_2(\R)}$ and define, for every $g\in \widetilde{SL_2(\R)}$, the following operator on $V_l^{\lambda',\lambda''}$:
\begin{equation}
\Theta \circ \widehat{\pi_{\lambda'''}(g)}\circ \Theta^{-1}.
\end{equation}
where $\widehat{\pi_{\lambda'''}(g)}=\mathcal{F}^{-1}\circ \pi_{\lambda'''}(g) \circ \mathcal{F}$.

\subsection{Symmetry breaking and holographic operator}
According to the orthogonality of Jacobi polynomials, we have the following isomorphisms of Hilbert spaces:
\begin{align}
&L^2_{\lambda',\lambda''}(\R^+\times(-1;1))\nonumber\\
\simeq~~ & L^2\left(\R^+ ,t^{\lambda'+\lambda''-1}~dt\right)\hat{\otimes}L^2\left((-1;1),d\mu_{\lambda',\lambda''}(v)\right) \nonumber\\
\simeq~~& \left.\sum_{l\geq 0}\right.^\oplus L^2\left(\R^+ ,t^{\lambda'+\lambda''-1}~dt\right)\hat{\otimes}~ \C\cdot P^{\lambda'-1,\lambda''-1}_l(v)\label{eq:orthogonal_sum}\\
\simeq~~&\left.\sum_{l\geq 0}\right.^\oplus L^2\left(\R^+ ,t^{\lambda'+\lambda''-1}~dt\right)\hat{\otimes}~V_l^{\lambda',\lambda''}.\nonumber
\end{align}
where $\sum^\oplus$ stands for an orthogonal Hilbert sum. The last isomorphism is due to the fact that the Jacobi polynomials form a Hilbert basis for $L^2((-1;1),d\mu_{\lambda',\lambda''}(v))$ if $\lambda',~\lambda''>1$. 

This allows us to consider the orthogonal projection $\mathcal{J}^{\lambda',\lambda''}_l$ from $L^2_{\lambda',\lambda''}(\R^+\times(-1;1))$ on the Hilbert subspace $V_l^{\lambda',\lambda''}$ 
which corresponds to the usual Jacobi transform:
\begin{equation}
\mathcal{J}^{\lambda',\lambda''}_l(h)(t,v)=\frac{P_l^{\lambda'-1,\lambda''-1}(v)}{\|P_l^{\lambda'-1,\lambda''-1}\|^2}\int_{-1}^1 h(t,u)P_l^{\lambda'-1,\lambda''-1}(u)~d\mu_{\lambda',\lambda''}(u).
\end{equation}

We summarize the situation in the following Proposition.
\begin{prop}\label{thm:diagramm_SL2}
Let $\lambda',~\lambda'',~\lambda'''$ verify \eqref{condition}. The following diagram is commutative
 \[\xymatrix{
    L^2_{\lambda',\lambda''}(\R^+\times\R^+) \ar[rrr]^{c_1\widehat{RC_{\lambda',\lambda''}^{\lambda'''}} }\ar[d]_{T_\iota}  &&& L^2_{\lambda'''} (\R^+)\ar[d]^{\Theta} \ar[rr]^{c_2\Phi_{\lambda',\lambda''}^{\lambda'''}} && L^2_{\lambda',\lambda''}(\R^+\times\R^+)\\
    L^2_{\lambda',\lambda''}(\R^+\times(-1;1)) \ar[rrr]_{\mathcal{J}^{\lambda',\lambda''}_l} &&& V_l^{\lambda',\lambda''}\ar@{^{(}->} [urr]_{T_{\iota}^{-1}}
  }\]
  where $c_1=\frac{2^{\lambda'+\lambda''-3}i^l}{\|P_l^{\lambda'-1,\lambda''-1}\|}$ and $c_2=\frac{1}{\|P_l^{\lambda'-1,\lambda''-1}\|}$.
\end{prop}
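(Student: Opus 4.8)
\textbf{Proof plan for Proposition \ref{thm:diagramm_SL2}.}

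The plan is to verify the commutativity by tracking a function through the two composite paths around each of the two squares, relying on the explicit integral formulas already established. I would first recall the three relevant operators in the $L^2$-model: $\widehat{\RC}$ from \eqref{eq:RC_hat}, $\Phi_{\lambda',\lambda''}^{\lambda'''}$ from \eqref{eq:holographic_L2_phi}, the isometries $T_\iota$, $\Theta$ and their inverses, and the Jacobi transform $\mathcal{J}_l^{\lambda',\lambda''}$. The key computational observation is that the change of variables $(x,y) = \iota(t,v) = (\frac{t}{2}(1-v),\frac{t}{2}(1+v))$ is \emph{exactly} the substitution appearing inside the integral defining $\widehat{\RC}$ in \eqref{eq:RC_hat}; this is the content of the geometric remark following \eqref{eq:diffeo_SL2}, and it is why the two squares should commute.

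For the \textbf{left square}, I would start with $F \in L^2_{\lambda',\lambda''}(\R^+\times\R^+)$ and compute $\Theta^{-1}\circ \mathcal{J}_l^{\lambda',\lambda''}\circ T_\iota(F)$. Applying $T_\iota$ produces $(\frac{t}{2})^{2-\lambda'-\lambda''}(1-v)^{1-\lambda'}(1+v)^{1-\lambda''}F\circ\iota(t,v)$; applying $\mathcal{J}_l^{\lambda',\lambda''}$ integrates this against $P_l^{\lambda'-1,\lambda''-1}(u)\,d\mu_{\lambda',\lambda''}(u) = P_l^{\lambda'-1,\lambda''-1}(u)(1-u)^{\lambda'-1}(1+u)^{\lambda''-1}\,du$, so the weight factors $(1-u)^{1-\lambda'}(1+u)^{1-\lambda''}$ cancel against the measure and one is left with $\int_{-1}^1 P_l^{\lambda'-1,\lambda''-1}(u)\,(\frac{t}{2})^{2-\lambda'-\lambda''}F(\frac{t}{2}(1-u),\frac{t}{2}(1+u))\,du$ times $\frac{P_l^{\lambda'-1,\lambda''-1}(v)}{\|P_l^{\lambda'-1,\lambda''-1}\|^2}$. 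Applying $\Theta^{-1}$ multiplies by $\|P_l^{\lambda'-1,\lambda''-1}\|\,t^{\lambda'+\lambda''+l-1}$, cancelling one power of the norm and introducing $t^{\lambda'+\lambda''+l-1}$; after collecting powers of $t$ and the factor $2^{\lambda'+\lambda''-2}$ from $(\frac{t}{2})^{2-\lambda'-\lambda''}$, this matches $c_1\widehat{\RC}F(t) = \frac{2^{\lambda'+\lambda''-3}i^l}{\|P_l^{\lambda'-1,\lambda''-1}\|}\cdot\frac{t^{l+1}}{2i^l}\int_{-1}^1 P_l^{\lambda'-1,\lambda''-1}(u)F(\frac{t}{2}(1-u),\frac{t}{2}(1+u))\,du$ — the $i^l$ cancels, the factor $\frac12$ combines with $2^{\lambda'+\lambda''-3}$ to give $2^{\lambda'+\lambda''-2}$, and the power of $t$ works out since $(l+1) + (\lambda'+\lambda''-2) = \lambda'+\lambda''+l-1$. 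This is the bookkeeping of powers and the constant $c_1$, which is routine but must be done carefully.

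For the \textbf{right square}, I would compute $T_\iota^{-1}\circ\Theta(h)$ for $h\in L^2_{\lambda'''}(\R^+)$ and check it equals $c_2\Phi_{\lambda',\lambda''}^{\lambda'''}(h)$. Here $\Theta(h)(t,v) = t^{-(\lambda'+\lambda''+l-1)}\frac{P_l^{\lambda'-1,\lambda''-1}(v)}{\|P_l^{\lambda'-1,\lambda''-1}\|}h(t)$, and $T_\iota^{-1}$ sends a function $\phi(t,v)$ to $x^{\lambda'-1}y^{\lambda''-1}\phi\circ\iota^{-1}(x,y)$. Under $\iota^{-1}$ one has $t = x+y$ and $v = \frac{y-x}{x+y}$, so $T_\iota^{-1}\Theta(h)(x,y) = x^{\lambda'-1}y^{\lambda''-1}(x+y)^{-(\lambda'+\lambda''+l-1)}\frac{P_l^{\lambda'-1,\lambda''-1}(\frac{y-x}{x+y})}{\|P_l^{\lambda'-1,\lambda''-1}\|}h(x+y)$, which is precisely $\frac{1}{\|P_l^{\lambda'-1,\lambda''-1}\|}\Phi_{\lambda',\lambda''}^{\lambda'''}(h)(x,y) = c_2\Phi_{\lambda',\lambda''}^{\lambda'''}(h)(x,y)$ by definition \eqref{eq:holographic_L2_phi}. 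This square is essentially a direct comparison of definitions.

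The main obstacle — if one can call it that — is purely the constant-chasing in the left square: one must correctly combine the factor $\frac{t}{2}$ from the Jacobi change of variables, the $t$-powers introduced by $T_\iota$ and $\Theta^{-1}$, the normalization $\|P_l^{\lambda'-1,\lambda''-1}\|$, and the $i^l$ and $\frac{1}{2i^l}$ from $\widehat{\RC}$, to land exactly on $c_1$. Everything else follows from substituting the explicit formulas and the identity $\iota^{-1}(x,y) = (x+y,\frac{y-x}{x+y})$. One should also note for completeness that all maps involved are isometries or bounded operators between the stated Hilbert spaces (established earlier in this section), so the identities, proved on the dense subspaces where the integrals converge absolutely, extend to the whole spaces.
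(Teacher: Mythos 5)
Your strategy is exactly the paper's: unwind the explicit formulas for $T_\iota$, $\Theta$, $\mathcal{J}^{\lambda',\lambda''}_l$, $\widehat{\RC}$ and $\Phi_{\lambda',\lambda''}^{\lambda'''}$ and compare the two paths. Your treatment of the right square is correct and is precisely the paper's closing remark that $T_\iota^{-1}\circ\Theta=c_2\Phi_{\lambda',\lambda''}^{\lambda'''}$, via $\iota^{-1}(x,y)=(x+y,\tfrac{y-x}{x+y})$. (The paper organizes the computation slightly differently, comparing $\Phi_{\lambda',\lambda''}^{\lambda'''}\circ\widehat{\RC}$ with $T_\iota^{-1}\circ\mathcal{J}^{\lambda',\lambda''}_l\circ T_\iota$ around the outer rectangle, but it is the same calculation.)

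The one step you yourself single out as the crux --- the constant-chasing in the left square --- is where your write-up fails: $\tfrac12\cdot 2^{\lambda'+\lambda''-3}=2^{\lambda'+\lambda''-4}$, not $2^{\lambda'+\lambda''-2}$, so the claimed match of constants is an arithmetic slip rather than a verification. Carrying your own computation through to the end (the weight cancellation against $d\mu_{\lambda',\lambda''}$ and the power count $t^{\lambda'+\lambda''+l-1}\cdot t^{2-\lambda'-\lambda''}=t^{l+1}$ are correct as you state them) gives
\begin{align*}
\Theta^{-1}\circ\mathcal{J}^{\lambda',\lambda''}_l\circ T_\iota(F)(t)
&=\frac{2^{\lambda'+\lambda''-2}}{\|\Jacobi\|}\,t^{l+1}\int_{-1}^1 \Jacobi(u)\,F\left(\tfrac{t}{2}(1-u),\tfrac{t}{2}(1+u)\right)du\\
&=\frac{2^{\lambda'+\lambda''-1}i^{l}}{\|\Jacobi\|}\,\widehat{\RC}F(t),
\end{align*}
so the constant forced by the computation is $2^{\lambda'+\lambda''-1}i^{l}/\|\Jacobi\|$, which differs from the stated $c_1$ by a factor of $4$. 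The same factor of $4$ appears if one compares the two displayed expressions in the paper's own proof, so the discrepancy sits in the normalization of $c_1$ rather than in your method; but your verification, as written, cannot detect it because the erroneous cancellation manufactures agreement. You should redo that single line carefully and either correct $c_1$ or exhibit the compensating factor; the rest of the argument, including the closing remark on density and boundedness, is fine.
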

\begin{proof}
Let $f\in L^2_{\lambda',\lambda''}(\R^+\times\R^+)$. On one hand side we have:
\begin{align*}
&\Phi_{\lambda',\lambda''}^{\lambda'''}\circ \widehat{\RC}f(x,y)\\
&=\frac{x^{\lambda'-1}y^{\lambda''-1}}{2i^l(x+y)^{\lambda'+\lambda''-2}}P_l^{\lambda'-1,\lambda''-1}\left(\frac{y-x}{x+y}\right)\int_{-1}^1P_l^{\lambda'-1,\lambda''-1}(u)F(\iota(x+y),u)~du.
\end{align*}
On the other hand side we have:
\begin{align*}
&T_{\iota}^{-1}\circ \mathcal{J}^{\lambda',\lambda''}_l \circ T_\iota (f)(x,y)\\
&=T_{\iota}^{-1}\left( (t,v)\mapsto \left(\frac{t}{2}\right)^{2-\lambda'-\lambda''}P_l^{\lambda'-1,\lambda''-1}(v)\int_{-1}^1f(\iota(t,u))P_l^{\lambda'-1,\lambda''-1}(u)~du \right)\\
&=\frac{2^{\lambda'+\lambda''-2}}{\|P_l^{\lambda'-1,\lambda''-1}\|^2}\frac{x^{\lambda'-1}y^{\lambda''-1}}{(x+y)^{\lambda'+\lambda''-2}}P_l^{\lambda'-1,\lambda''-1}\left(\frac{y-x}{x+y}\right)\int_{-1}^1P_l^{\lambda'-1,\lambda''-1}(u)F(\iota(x+y),u)~du.
\end{align*}
A direct computation shows that $T_{\iota}^{-1}\circ \Theta=c_2\Phi_{\lambda',\lambda''}^{\lambda'''}$.
\end{proof}

The fact that $\widehat{RC_{\lambda',\lambda''}^{\lambda'''}}$ and $\Phi_{\lambda',\lambda''}^{\lambda'''}$ are intertwinning operators (see \cite{KP}, Thm.2.11), and Proposition \ref{thm:diagramm_SL2} lead to the following:
\begin{prop}\label{fact:sym_break}
The orthogonal projection $\mathcal{J}^{\lambda',\lambda''}_l$ is a symmetry breaking operator for the $\widetilde{SL_2(\R)}$ action \eqref{eq:stratified_model} in the stratified $L^2$-model and the canonical injection corresponds to the associated holographic operator.
\end{prop}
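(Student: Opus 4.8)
The plan is to deduce this Proposition directly from the commutative diagram of Proposition \ref{thm:diagramm_SL2} together with the known equivariance of $\widehat{RC_{\lambda',\lambda''}^{\lambda'''}}$ and $\Phi_{\lambda',\lambda''}^{\lambda'''}$. First I would recall that, by definition \eqref{eq:stratified_model}, the operators $T_\iota$ and $\Theta$ are unitary intertwiners: $T_\iota$ transports the action $\widehat{\pi_{\lambda',\lambda''}}$ of $\widetilde{SL_2(\R)}$ on $L^2_{\lambda',\lambda''}(\R^+\times\R^+)$ to the action \eqref{eq:stratified_model} on $L^2_{\lambda',\lambda''}(\R^+\times(-1;1))$, and $\Theta$ transports the action $\widehat{\pi_{\lambda'''}}$ on $L^2_{\lambda'''}(\R^+)$ to the action $\Theta\circ\widehat{\pi_{\lambda'''}}\circ\Theta^{-1}$ on $V_l^{\lambda',\lambda''}$. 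These are isometric isomorphisms by construction, so they are in particular morphisms of $\widetilde{SL_2(\R)}$-representations.

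Next I would invoke \cite{KP}, Thm.~2.11, which asserts that $\widehat{RC_{\lambda',\lambda''}^{\lambda'''}}\in\operatorname{Hom}_{\widetilde{SL_2(\R)}}(\widehat{\pi_{\lambda',\lambda''}},\widehat{\pi_{\lambda'''}})$ and that $\Phi_{\lambda',\lambda''}^{\lambda'''}$ is the associated holographic operator, i.e. lies in $\operatorname{Hom}_{\widetilde{SL_2(\R)}}(\widehat{\pi_{\lambda'''}},\widehat{\pi_{\lambda',\lambda''}})$. Then, reading the left square of the diagram in Proposition \ref{thm:diagramm_SL2}, we have $\mathcal{J}^{\lambda',\lambda''}_l = c_1\,\Theta\circ\widehat{RC_{\lambda',\lambda''}^{\lambda'''}}\circ T_\iota^{-1}$, a composition of the intertwiner $T_\iota^{-1}$, the intertwiner $c_1\widehat{RC_{\lambda',\lambda''}^{\lambda'''}}$, and the intertwiner $\Theta$; hence $\mathcal{J}^{\lambda',\lambda''}_l$ intertwines the stratified action \eqref{eq:stratified_model} with the transferred action on $V_l^{\lambda',\lambda''}$, so it is a symmetry breaking operator. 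Symmetrically, reading the right part of the diagram gives $T_\iota^{-1}\circ\Theta = c_2\,\Phi_{\lambda',\lambda''}^{\lambda'''}$, so the canonical injection $V_l^{\lambda',\lambda''}\hookrightarrow L^2_{\lambda',\lambda''}(\R^+\times(-1;1))$ equals $T_\iota\circ\big(c_2\Phi_{\lambda',\lambda''}^{\lambda'''}\big)\circ\Theta^{-1}$, again a composition of three intertwiners, hence itself an intertwiner going the other way, i.e. the holographic operator associated with $\mathcal{J}^{\lambda',\lambda''}_l$. Finally I would note that, since $\Phi_{\lambda',\lambda''}^{\lambda'''}$ is proportional to the adjoint $\big(\widehat{RC_{\lambda',\lambda''}^{\lambda'''}}\big)^*$ and $T_\iota,\Theta$ are unitary, the canonical injection is indeed (up to scalar) the adjoint of $\mathcal{J}^{\lambda',\lambda''}_l$, confirming that it plays the role of holographic operator in the sense of the introduction.

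The only point requiring a little care—the main obstacle, such as it is—is checking that $T_\iota$ and $\Theta$ are genuinely $\widetilde{SL_2(\R)}$-equivariant for the actions we have named, rather than merely unitary: but this is immediate from the very definitions \eqref{eq:stratified_model} and the definition of the action on $V_l^{\lambda',\lambda''}$, since those actions are \emph{defined} by conjugating the $L^2$-model actions by $T_\iota$ and $\Theta$ respectively. Thus there is essentially nothing to prove beyond bookkeeping with the diagram, and the statement follows.
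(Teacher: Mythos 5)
Your argument is correct and is essentially the paper's own proof: the paper likewise deduces the Proposition from the commutative diagram of Proposition \ref{thm:diagramm_SL2} together with the equivariance of $\widehat{\RC}$ and $\Phi_{\lambda',\lambda''}^{\lambda'''}$ from \cite{KP}, Thm.~2.11, the actions on $L^2_{\lambda',\lambda''}(\R^+\times(-1;1))$ and $V_l^{\lambda',\lambda''}$ being by definition the conjugates by $T_\iota$ and $\Theta$. You merely spell out the bookkeeping that the paper leaves implicit.
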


In \cite{KP}, the authors explore the link between the symmetry breaking operator and Jacobi transform in this geometric setting (see Rmk.2.15). Proposition \ref{fact:sym_break} directly relates the $L^2$-space associated with Jacobi polynomials to the construction of symmetry breaking operators for the holomorphic discrete series representations of $\widetilde{SL_2(\R)}$ and explains the structure of the corresponding branching rules. 

\textit{Remark:} We should mention that we have similar result in the conformal case $(G,G')=(SO_0(2,n),SO_0(2,n-1))$ (see \cite{KP} for more detatils). For this we use the stratification of the time-like cone $\Omega(n)$ given by the diffeomorphism  
\begin{equation}\label{eq:diffeo_conforme}
\begin{aligned}[c]
\iota : &~ \Omega(n-1)\times(-1,1) &\to & \qquad \quad\Omega(n)\\
&\qquad\quad(y',v)&\mapsto& \left(y',-vQ_{1,n-2}(y')^{\frac{1}{2}}\right).
\end{aligned}
\end{equation}

\section{Appendix: a lemma about Kummer function}\label{sec:hypergeo}
The Kummer function, denoted $_1F_1$, is a special case of generalized hypergeometric functions defined for complex parameter $a,b$ where $b$ is not a negative integer by 
\begin{equation}\label{def:hypergeo}
_1F_1(a,b;x)=\sum_{n=0}^\infty\frac{(a)_n}{(b)_n}\frac{x^n}{n!}.
\end{equation}
For more results about Kummer function, we refer to \cite{Beals_Wong}, for example.
The Kummer function admits the following integral representation in terms of Jacobi polynomials:
\begin{lemme}\label{lem:Fourier_poids_Kummer}
Let $\alpha,\beta>1$, $l\in\N$ and $x\in \R^+$. Then:
\begin{equation}\label{eq:integral_representation_Kummer}
C_{\alpha,\beta}x^{l}e^{ix}~_1F_1(\alpha+l,\alpha+\beta+2l;-2ix)=\int_{-1}^1P_l^{\alpha-1,\beta-1}(v)e^{ivx}(1-v)^{\alpha-1}(1+v)^{\beta-1}~dv,
\end{equation}
where $C_{\alpha,\beta}=\frac{2^{\alpha+\beta+l-1}i^l}{l!}B(\alpha+l,\beta+l)$, and $B(x,y)$ is the usual Euler beta function.
\end{lemme}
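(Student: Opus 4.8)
The plan is to prove the integral representation \eqref{eq:integral_representation_Kummer} by expanding the Jacobi polynomial in one of its standard hypergeometric forms and integrating term by term against the Jacobi weight, recognizing the result as a ${}_1F_1$ via a Kummer-type transformation. First I would recall the explicit expansion of $P_l^{\alpha-1,\beta-1}$ — for instance the formula
\[
P_l^{\alpha-1,\beta-1}(v)=\frac{(\alpha)_l}{l!}\sum_{k=0}^l\binom{l}{k}\frac{(\alpha+\beta+l-1)_k}{(\alpha)_k}\left(\frac{v-1}{2}\right)^k,
\]
or equivalently its Rodrigues representation. Substituting $v=1-2s$ (so that $s\in(0,1)$) converts the right-hand side of \eqref{eq:integral_representation_Kummer} into $e^{ix}$ times a sum of Euler-type beta integrals $\int_0^1 s^{\alpha-1+k}(1-s)^{\beta-1}e^{-2ixs}\,ds$. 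Each such integral is, by the standard Euler integral representation, a Kummer function ${}_1F_1(\alpha+k,\alpha+\beta+k;-2ix)$ multiplied by $B(\alpha+k,\beta)$.

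The next step is to collect these $l+1$ terms into a single ${}_1F_1$. Here I expect to invoke the contiguous-relation/summation identity that expresses a finite combination of ${}_1F_1(\alpha+k,\alpha+\beta+k;\cdot)$, weighted by $\binom{l}{k}$ and appropriate Pochhammer ratios, as $x^{-l}$ (up to constants) times ${}_1F_1(\alpha+l,\alpha+\beta+2l;\cdot)$ — this is precisely the kind of quadratic/shift identity underlying the fact that Jacobi polynomials are the spectral data of the Jacobi transform. An alternative, and perhaps cleaner, route is to use the Rodrigues formula $P_l^{\alpha-1,\beta-1}(v)=\frac{(-1)^l}{2^l l!}(1-v)^{1-\alpha}(1+v)^{1-\beta}\frac{d^l}{dv^l}\big[(1-v)^{\alpha-1+l}(1+v)^{\beta-1+l}\big]$, plug it in, integrate by parts $l$ times (the boundary terms vanish because of the factors $(1\mp v)^{\alpha-1+l},(1\pm v)^{\beta-1+l}$ with exponents $>0$), which moves $l$ derivatives onto $e^{ivx}$ and produces a factor $(ix)^l$. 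One is then left with $\frac{(ix)^l}{2^l l!}\int_{-1}^1 (1-v)^{\alpha+l-1}(1+v)^{\beta+l-1}e^{ivx}\,dv$, and the substitution $v=1-2s$ turns this single integral directly into $2^{\alpha+\beta+2l-1} e^{ix} B(\alpha+l,\beta+l)\,{}_1F_1(\alpha+l,\alpha+\beta+2l;-2ix)$ by the Euler integral representation of ${}_1F_1$. Matching constants then yields $C_{\alpha,\beta}=\frac{2^{\alpha+\beta+l-1}i^l}{l!}B(\alpha+l,\beta+l)$ exactly as stated.

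The main obstacle is organizational rather than deep: ensuring all the normalizing constants (the power of $2$ from $v=1-2s$, the $(ix)^l$ from integration by parts, the $i^l$ versus $(-1)^l$ bookkeeping, and the beta-function value) combine correctly, and verifying that the integration-by-parts boundary terms genuinely vanish, which requires $\alpha,\beta>1$ — and this hypothesis is given. A secondary technical point is the convergence/analyticity: the integral on the right of \eqref{eq:integral_representation_Kummer} is entire in $x$, the series defining ${}_1F_1$ converges for all $x$, so the identity, once established for $x\in\R^+$, extends by analytic continuation if needed, though for the application $x=t>0$ suffices. I would present the Rodrigues-plus-integration-by-parts argument as the primary proof since it avoids any delicate hypergeometric summation identity.
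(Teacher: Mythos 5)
Your primary argument --- Rodrigues formula, $l$-fold integration by parts (with boundary terms vanishing since $\alpha+l-1,\beta+l-1>0$), then the substitution $v=1-2s$ into the Euler integral representation of ${}_1F_1$ --- is exactly the paper's proof, and your constant bookkeeping ($\frac{i^lx^l}{2^ll!}\cdot 2^{\alpha+\beta+2l-1}B(\alpha+l,\beta+l)=C_{\alpha,\beta}$ times $x^l$) is correct. The alternative route via term-by-term hypergeometric expansion and contiguous relations is unnecessary, as you yourself note.
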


\begin{proof}
The following integral representation for the Kummer function, for $Re(c)>Re(a)>0$ is classical (see \cite{Beals_Wong} p.190): 
\begin{align}\label{eq:integral_rep_Kummer_inter}
_1F_1(a,c;x)&=\frac{1}{B(a,c-a)}\int_0^1 s^{a-1}(1-s)^{c-a-1}e^{sx}~ds\nonumber\\
&=\frac{e^{\frac{x}{2}}}{B(a,c-a)2^{c-1}}\int_{-1}^1 e^{-\frac{xv}{2}}(1-v)^{a-1}(1+v)^{c-a-1}~dv,
\end{align}
where we put $v=1-2s$.

Then, the Rodrigues formula for Jacobi polynomials (see \cite{KP}, section 5.1) and integration by part give:
\begin{align*}
&\int_{-1}^1P_l^{\alpha-1,\beta-1}(v)e^{ivx}(1-v)^{\alpha-1}(1+v)^{\beta-1}~dv\\
&=\frac{(-1)^l}{2^ll!}\int_{-1}^1 e^{ixv}\left(\frac{d}{dv}\right)^l\left((1-v)^{\alpha+l-1}(1+v)^{\beta+l-1}\right)~dv\\
&=\frac{1}{2^ll!}\int_{-1}^1\left(\frac{d}{dv}\right)^le^{ixv}~(1-v)^{\alpha+l-1}(1+v)^{\beta+l-1}~dv\\
&=\frac{i^lx^l}{2^ll!}\int_{-1}^1e^{ixv}~(1-v)^{\alpha+l-1}(1+v)^{\beta+l-1}~dv.
\end{align*}
The result follows from the integral representation \eqref{eq:integral_rep_Kummer_inter} for the Kummer function with $c=\alpha+\beta+2l>\alpha+2l>0$.
\end{proof}

\bibliographystyle{alpha}
\bibliography{./Biblio}

\begin{thebibliography}{KP16b}

\bibitem[BW10]{Beals_Wong}
R.~Beals and R.~Wong.
\newblock {\em Special functions}, volume 126 of {\em Cambridge Studies in
  Advanced Mathematics}.
\newblock Cambridge University Press, Cambridge, 2010.
\newblock A graduate text.

\bibitem[Cle20]{JL.Clerc}
J-L. Clerc.
\newblock Rankin-{C}ohen brackets on tube-type domains.
\newblock 2020.
\newblock arXiv:2003.06133.

\bibitem[DP07]{Dijk_Pev}
G.~van Dijk and M.~Pevzner.
\newblock Ring structures for holomorphic discrete series and {R}ankin-{C}ohen
  brackets.
\newblock {\em J. Lie Theory}, 17(2):283--305, 2007.

\bibitem[FK94]{Far_Kor}
J.~Faraut and A.~Kor\'{a}nyi.
\newblock {\em Analysis on symmetric cones}.
\newblock Oxford Mathematical Monographs. The Clarendon Press, Oxford
  University Press, New York, 1994.
\newblock Oxford Science Publications.

\bibitem[Hel01]{Helga}
Sigurdur Helgason.
\newblock {\em Differential geometry, {L}ie groups, and symmetric spaces},
  volume~34 of {\em Graduate Studies in Mathematics}.
\newblock American Mathematical Society, Providence, RI, 2001.
\newblock Corrected reprint of the 1978 original.

\bibitem[KP16a]{KP_SBO_1}
T.~Kobayashi and M.~Pevzner.
\newblock Differential symmetry breaking operators: {I}. {G}eneral theory and
  {F}-method.
\newblock {\em Selecta Math. (N.S.)}, 22(2):801--845, 2016.

\bibitem[KP16b]{KP_SBO}
T.~Kobayashi and M.~Pevzner.
\newblock Differential symmetry breaking operators: {II}. {R}ankin-{C}ohen
  operators for symmetric pairs.
\newblock {\em Selecta Math. (N.S.)}, 22(2):847--911, 2016.

\bibitem[KP20]{KP}
T.~Kobayashi and M.~Pevzner.
\newblock Inversion of {R}ankin-{C}ohen operators via holographic transform.
\newblock {\em To appear in Annales de l'Institut Fourier}, 2020.

\bibitem[Ros99]{Rosen}
H.~Rosengren.
\newblock Multivariable orthogonal polynomials and coupling coefficients for
  discrete series representations.
\newblock {\em SIAM J. Math. Anal.}, 30(2):232--272, 1999.

\end{thebibliography}
\end{document}